\let\NAT@parse\undefined
\newtheorem{theorem}{Theorem}
\newtheorem{lemma}{Lemma}
\newtheorem{remark}{Remark}
\newtheorem{definition}{Definition}
\newtheorem{proposition}{Proposition}
\newtheorem{example}{Example}
\newtheorem{corollary}{Corollary}
\def \b {\beta}
\def \e {\varepsilon}
\def \l {\lambda}
\def \L {\mathcal{L}}
\def \<{\langle}
\def \>{\rangle}
\title{\LARGE \bf
Controllability Canonical Forms of Linear Ensemble Systems
%
}
\author{Wei Zhang, \IEEEmembership{IEEE Member} and Jr-Shin Li, \IEEEmembership{Senior Member, IEEE}
\thanks{W. Zhang is with the Department of Electrical and Systems Engineering, Washington University,
		St. Louis, MO 63130, USA
        {\tt\small wei.zhang@wustl.edu}}%
\thanks{J.-S. Li is with the Department of Electrical and Systems Engineering, Washington University,
        St. Louis, MO 63130, USA
        {\tt\small jsli@wustl.edu}}%
}
\begin{document}

\maketitle

\begin{abstract}
Ensemble control, an emerging research field focusing on the study of large populations of dynamical systems, has demonstrated great potential in numerous scientific and practical applications. Striking examples include pulse design for exciting spin ensembles in quantum physics, neurostimulation for relieving neurological disorder symptoms, and path planning for steering robot swarms. However, the control targets in such applications are generally large-scale complex and severely underactuated ensemble systems, research into which stretches the capability of techniques in classical control and dynamical systems theory to the very limit. Even for the simplest class of ensemble systems, that is, time-invariant linear ensemble systems, our understanding is still far from complete. This paper then devotes to advancing our knowledge about controllability of this type of ensemble systems by integrating tools in modern algebra into the technique of separating points developed in our recent work. In particular, we give an algebraic interpretation of the dynamics of linear systems in terms of the action of polynomials on vector spaces, and this leads to the development of the functional canonical form of matrix-valued functions, which can also be viewed as the generalization of the rational canonical form of matrices in linear algebra. Then, leveraging the technique of separating points,  we achieve a necessary and sufficient characterization of uniform ensemble controllability for time-invariant linear ensemble systems as the ensemble controllability canonical form, in which the system and control matrices are in the functional canonical and block diagonal form, respectively. This work successfully launches a new research scheme of adopting and tailoring finite-dimensional methods, such as those in classical linear systems theory and matrix algebra, to tackle control problems involving infinite-dimensional ensemble systems, and hence lays a solid foundation for a more inclusive ensemble control theory targeting at a much broader spectrum of control and learning problems in both scientific research and practice.

\end{abstract}

\begin{IEEEkeywords}
Ensemble control, ensemble systems, linear systems, controllability, separating points, rational canonical form 
\end{IEEEkeywords}

\section{Introduction}
\label{sec:introduction}
Systems constituting huge amounts, in the limit continuum, of structurally identical dynamic components, called \emph{ensemble systems}, are prevalent in natural sciences and engineering. Targeted coordination and robust manipulation of ensemble systems, referred to as \emph{ensemble control} tasks, then mark an essential step in the study of numerous scientific and practical problems, such as pulse design for nuclear magnetic resonance (NMR) spectroscopy and imaging (MRI) in quantum physics \cite{Glaser98,Li_PRA06,Li_TAC09,Li_PNAS11,Li_NatureComm17}, neurostimulation for treatments of neurological disorders in neurology and neurosurgery \cite{Hochberg06,Ching2013b,Li_TAC13,Kafashan2015}, synchronization analysis for rhythmic networks in network science \cite{Rosenblum04,Nakao07,Kiss07,Li_NatureComm16}, path planning for robot swarms in robotics \cite{Becker12}. These emerging applications immediately spot a novel research trend in control theory and engineering targeting at large-scale ensemble systems. However, variation of dispersion parameters is a common phenomenon that occurs in such ensembles revealing discrepancies in the dynamics of their individual units, which, together with the huge size, discloses the severely underacturated nature of ensemble systems. Consequently, ensemble control problems present significant challenges to control theorists and engineers, and are beyond the capability of techniques in classical control theory, e.g., design of state observers and feedback control inputs.  

Driven by the hope for a more inclusive and general control theory that provides innovative thinking about those seemingly inaccessible ensemble control problems, in this decade, considerable efforts have been devoted to expanding the repertoire of tools in and the scope of classical control and dynamical systems theory for technically increasing the accessibility of control and learning tasks involving ensemble systems. These works greatly advance our understanding of fundamental properties, particularly, controllability and observability, of ensemble systems from interdisciplinary perspectives, including differential geometry, Lie theory, representation theory, approximation theory, functional analysis, complex analysis, and probability theory and statistics \cite{Li_TAC09,Li_TAC11,Schonlein13,Helmke14,Li_TAC16,Zeng_16_moment,zeng2016tac,Dirr16,Li_SCL18,XD_Chen19,Li_SICON20}. 

Indubitably, time-invariant linear ensemble systems are the simplest class of ensemble systems, but our knowledge about them is still in an elementary level. The purpose of this paper is then to enhance our understanding of the concept of uniform ensemble controllability for such systems from the perspective of modern algebra. To this end, we initiate the investigation by an algebraic interpretation of the dynamics of linear systems in terms of the action of polynomials on vector spaces, which directly leads to the representation of the system matrices in the rational canonical form. Then, leveraging the technique of separating points established in our recent work \cite{Li_SICON20}, we develop the notion of \emph{functional canonical form} for matrix-valued functions so that uniform ensemble controllability of linear ensemble systems guarantees the feasibility of transforming these systems into an \emph{ensemble controllability canonical form}, in which the system and control matrices are in the functional canonical and block diagonal form, respectively, and vice versa. This, on the other hand, also gives an algebraic characterization of the notion of separating points, and equivalently, necessary and sufficient conditions for both ensemble and classical controllability of linear systems. 

More importantly, this work unifies the concept of separating points for finite-dimensional classical linear systems and infinite-dimensional linear ensemble systems, which further highlights the role of the separating point technique in exploring the possibility of tackling infinite-dimensional ensemble control tasks by utilizing well-established finite-dimensional methods, especially, those in classical linear systems theory and matrix algebra. Conrrespondingly, it also makes a substantial contribution to surmounting the technical obstacles on the way towards a comprehensive ensemble control theory. 

The paper is organized as follows. In the next section, we rigorously introduce the notions of ensemble systems and ensemble control, then briefly review the technique of separating points for analyzing uniform ensemble controllability of linear systems developed in \cite{Li_SICON20}. In Section \ref{sec:canonical}, we introduce the rational canonical form of matrices from a dynamical system viewpoint and illustrate the idea of separating points in the study of controllability for classical linear systems by using the rational canonical form, which represents a major step towards the unification of the separating point technique for classical and ensemble linear systems. Next, following from the foundation laid in Section \ref{sec:canonical}, Section \ref{sec:canoncial_ensemble} devotes to the development of the functional canonical from of matrix-valued functions for algebraically characterizing the notion of separating points in the ensemble case, which directly gives rise to an ensemble controllability canonical form for time-invariant linear ensemble systems.

\section{Concepts of Separating Points for Ensemble Controllability}
\label{sec:preliminaries}
Problems concerning the control of an ensemble of linear systems have been extensively studied \cite{Li_TAC09,Li_TAC11,Schonlein13,Helmke14,Li_TAC16,Zeng_16_moment,Dirr16,Li_SCL18}. An important recent finding was the link of the notion of separating points in abstract algebra with fundamental properties of ensemble systems. In particular, this discovery has led to the derivation of a necessary and sufficient condition for uniform ensemble controllability of linear ensemble systems \cite{Li_SICON20} and conditions under which classical and ensemble controllability are equivalent \cite{Li_SICON21_LieGroup,Li_CDC21}. In this section, we give an essential review of the ideas of separating points, which lays the foundation for this work. 



\subsection{Ensemble Systems and Ensemble Controllability}
An \emph{ensemble  system} is a parameterized family of dynamical systems evolving on a common manifold $M$ in the form of
\begin{align}
\label{eq:ensemble_general}
\frac{d}{dt}x(t,\beta)=F(\beta,x(t,\beta),u(t)),
\end{align}
where $\beta$ is the parameter varying on $\Omega\subseteq\mathbb{R}^d$, $u(t)\in\mathbb{R}^m$ is the control input, and $F(\beta,\cdot,u(t))$ is a vector field on $M$ for each fixed $\beta$ and $u(t)$. The state space of the ensemble system in \eqref{eq:ensemble_general} is then a space of $M$-valued functions defined on the parameter space $\Omega$, denoted by $\mathcal{F}(\Omega,M)$. In many practical problems, $\Omega$ is an infinite set so that $\mathcal{F}(\Omega,M)$ is an infinite-dimensional manifold. An \emph{ensemble control task} is to steer this infinite-dimensional system in \eqref{eq:ensemble_general} between desired states, i.e., functions, in $\mathcal{F}(\Omega,M)$. However, due to the large size and underactuated nature of the ensemble, it is generally impossible to obtain comprehensive measurements of each individual system, which forces the control signal $u$ to be a parameter-independent open-loop input. Consequently, ensemble control tasks greatly challenge and are also beyond the scope of classical control theory. For example, control inputs that steer such a system to a desired target state may not exist. Therefore, to facilitate analysis and control of ensemble systems, we extend the notion of controllability from the exact sense to the approximate sense.

\begin{definition}[Ensemble Controllability]
	\label{def:controllability}
	An ensemble system as in \eqref{eq:ensemble_general} is said to be ensemble controllable on $\mathcal{F}(\Omega,M)$, if for any $\varepsilon>0$ and starting with any initial state $x_0\in\mathcal{F}(\Omega,M)$, that is, $x_0(\cdot)=x(0,\cdot)$, there exists a piecewise constant control signal $u:[0,T]\rightarrow\mathbb{R}^m$ that steers the system into an $\e$-neighborhood of a desired target state $x_F\in\mathcal{F}(\Omega,M)$ in a finite time $T>0$, i.e., $d(x(T,\cdot),x_F(\cdot))<\e$, where $d:\mathcal{F}(\Omega,M)\times \mathcal{F}(\Omega,M)\rightarrow\mathbb{R}$ is a metric on $\mathcal{F}(\Omega,M)$. 
\end{definition}

In particular, when $\Omega$ is compact, $M$ is a Riemannian manifold, and $\mathcal{F}(\Omega,M)=C(\Omega,M)$ is the space of continuous $M$-valued functions defined on $\Omega$, the metric $d$ in Definition \ref{def:controllability} can be chosen as the uniform metric given by $d(f,g)=\sup_{\b\in\Omega}d_M\big(f(\b),g(\b)\big)$ for any $f,g\in C(\Omega,M)$, where $d_M:M\times M\rightarrow\mathbb{R}$ is the metric induced by the Riemannian metric on $M$. Ensemble controllability defined through the uniform metric is then referred to as \emph{uniform ensemble controllability}.

In this paper, we focus on uniform ensemble controllability of time-invariant linear ensemble systems of the form
\begin{equation}
\label{eq:ensemble_linear}
\frac{d}{dt}x(t,\b)=A(\b)x(t,\b)+B(\b)u(t)
\end{equation}
with the system matrix $A\in C(K,\mathbb{R}^{n\times n})$, control matrix $B\in C(K,\mathbb{R}^{n\times m})$, and state $x(t,\cdot)\in C(K,\mathbb{R}^n)$, where $K$ is a compact subset of $\mathbb{R}$. In this case, the eigenvalues of $A$, denoted by $\l_1$, $\dots$, $\l_n$, are also continuous functions in $\b$. 

\subsection{The Technique of Separating Points}
\label{sec:separating_pts}
This section devotes to introducing the technique of separating points developed in our recent work \cite{Li_SICON20}, which provides a systematic way to facilitate uniform ensemble controllability analysis for time-invariant linear ensemble systems in the form of \eqref{eq:ensemble_linear} by using classical controllability of each individual system in these ensembles. This approach is highly nontrivial and probably counterintuitive, because classical controllability of individual systems generally does not imply ensemble controllability of the whole ensemble as shown in the following examples.

\begin{example}
\label{ex:uncontrollable_1d}
Consider the following single-input scalar linear ensemble system defined on $C([-1,1],\mathbb{R})$
\begin{equation}
\label{eq:uncontrollable_1d}
\frac{d}{dt}x(t,\b)=\b^2x(t,\b)+u(t).
\end{equation}
For each fixed $\b_0\in[-1,1]$, the system indexed by $\b_0$ is controllable on $\mathbb{R}$. However, the reachable set of the whole ensemble, that is, the closure $\overline{\L}$ of the Lie algebra $\L={\rm span}\{1,\b^2,\b^4,\dots\}$ generated by the system matrix $\beta$ and control matrix 1, only contains even functions, which is a proper subset of $C([-1,1],\mathbb{R})$, and hence the system in \eqref{eq:uncontrollable_1d} is not uniformly ensemble controllable \cite{Li_TAC16}.
\end{example}

\begin{example}
\label{ex:uncontrollable_2d}
Consider the following single-input ensemble system defined on $C([1,2],\mathbb{R}^2)$,
\begin{equation}
\label{eq:uncontrollable_2d}
\frac{d}{dt}x(t,\b)=\b\left[\begin{array}{cc} 1 & 0 \\ 0 & 2 \end{array}\right]x(t,\b)+\left[\begin{array}{c} 1  \\ 1 \end{array}\right]u(t).
\end{equation}
Similarly to the previous example, each individual system in this ensemble indexed by a fixed $\b_0\in[1,2]$ is controllable on $\mathbb{R}^2$, due to the full rank of the controllability matrix
$$\left[\begin{array}{cc} 1 & 1 \\ \b_0 & 2\b_0 \end{array}\right].$$
However, any element $f=\left[\begin{array}{c} f_1  \\ f_2 \end{array}\right]$ in the reachable set $\overline\L$, where 
$$\L={\rm span}\Big\{\left[\begin{array}{c} \b^k  \\ (2\b)^k \end{array}\right]:k\in\mathbb{N}\Big\}$$
is the Lie algebra generated by the system and control matrices,
must satisfy $f_1(2)=f_2(1)$, which implies $\overline\L\subsetneq C([1,2],\mathbb{R}^2)$, and hence the system in \eqref{eq:uncontrollable_2d} is not uniformly ensemble controllable on $C([1,2],\mathbb{R}^2)$.
\end{example}

In fact, Examples \ref{ex:uncontrollable_1d} and \ref{ex:uncontrollable_2d} demonstrate the only two situations that may lead to ensemble uncontrollability of a time-invariant linear ensemble system providing classical controllability of each of its individual system: 
\begin{itemize}
\item Non-injectivity of eigenvalue functions, i.e., some eigenvalue functions of the system matrix have multiple injective branches. As shown in the system in \eqref{eq:uncontrollable_1d}, the eigenvalue function of its systems matrix is $\l(\b)=\b^2$ with $\b\in[-1,1]$, which is not injective, and to be more specific, has two injective branches $[-1,0)$ and $(0,1]$.
\item Shared spectra, i.e., the ranges of some eigenvalue functions of the system matrix have nonempty intersection. As shown in the system in \eqref{eq:uncontrollable_2d}, the eigenvalue functions of its systems matrix are $\l_1(\b)=\b$ and $\l_2(\b)=2\b$ with $\b\in[1,2]$, whose ranges are not disjoint as $\l_1([1,2])\cap\l_2([1,2])=[1,2]\cap[2,4]=\{2\}$.
\end{itemize}

Motivated by the above obervations, the central idea of the separating point technique is to separate points in different injective branches of each eigenvalue function and shared spectra of the system matrix by multiple control inputs. 

\begin{example}
\label{ex:controllable_1d}
In this example, we revisit the ensemble system in \eqref{eq:uncontrollable_1d}, which is not uniformly ensemble uncontrollable on $C([-1,1],\mathbb{R})$ due to the failure of separating points in the two injective branches $[-1,0)$ and $(0,1]$ of the system matrix $\l(\b)=\b^2$ by the sole control input $u$. To fix this, we apply another control $v$ to the system as
\begin{equation}
\label{eq:controllable_1d}
\frac{d}{dt}x(t,\b)=\b^2x(t,\b)+u(t)+\b v(t).
\end{equation}
Consequently, the Lie algebra generated by the system and control matrices $\L={\rm span}\{1,\b,\b^2,\b^3,\b^4,\dots\}$ contains all the monomials in $\b$ so that $\overline\L=C([-1,1],\mathbb{R})$ by the Weierstrass approximation theorem. Therefore, the system in \eqref{eq:controllable_1d} is uniformly ensemble controllable on $C([-1,1],\mathbb{R})$. 

This example further illuminates that, to guarantee uniform ensemble controllability of a scalar linear ensemble system, it is necessary that the number of control inputs applied to the system is greater than or equal to the number of injective branches of its drift.
\end{example}

\begin{example}
\label{ex:controllable_2d}
As explained before, the cause of uncontrollability of the ensemble system in \eqref{eq:uncontrollable_2d} is the lack of enough control inputs to separate the point in the shared spectrum of the system matrix, that is, $\l_1([,12])\cap\l_2([1,2])=\{2\}$. Similar to the previous example, we apply another control input $v$ to the system as
\begin{equation}
\label{eq:controllable_2d}
\frac{d}{dt}x(t,\b)=\b\left[\begin{array}{cc} 1 & 0 \\ 0 & 2 \end{array}\right]x(t,\b)+\left[\begin{array}{c} 1  \\ 1 \end{array}\right]u(t)+\left[\begin{array}{c} 0  \\ 1 \end{array}\right]v(t),
\end{equation}
whose reachable set is the uniform closure of 
$$\L={\rm span}\Big\{\left[\begin{array}{c} \b^k  \\ (2\b)^k \end{array}\right],\left[\begin{array}{c} 0 \\ (2\b)^l \end{array}\right]:k,l\in\mathbb{N}\Big\}.$$
For any $f=\left[\begin{array}{c} f_1  \\ f_2 \end{array}\right]\in C([1,2],\mathbb{R}^2)$ and $\e>0$, the Weierstrass approximation theorem guarantees the existence of degree $N$ polynomials $p_1(\b)=\sum_{k=0}^Nc_{1k}\beta^k$ and $p_2=\sum_{k=0}^Nc_{2k}\beta^k$ such that $\|f-p\|_\infty<\e$, where $p=\left[\begin{array}{c} p_1  \\ p_2 \end{array}\right]$. To see $p\in\L$, we rewrite it in the following form
$$p=\sum_{k=1}^Nc_{1k}\left[\begin{array}{c} \b^k  \\ (2\b)^k \end{array}\right]+\sum_{k=1}^N\frac{c_{2k}-2^kc_{1k}}{2^k}\left[\begin{array}{c} 0 \\ (2\b)^k \end{array}\right].$$
This then shows $\overline\L=C([1,2],\mathbb{R}^2)$, and hence concludes uniform ensemble controllability of the system in \eqref{eq:controllable_2d} on $C([1,2],\mathbb{R}^2)$.

Parallel to the observation on the minimum number of control inputs guaranteeing uniform ensemble controllability of scalar linear ensemble systems revealed in  Example \ref{ex:controllable_1d}, this example draws a similar conclusion for multi-dimensional linear ensemble systems, that is, uniform ensemble controllability also requires the number of control inputs to be greater than or equal to the number of shared spectra.  
\end{example}

One of the major contributions of the separating point technique is to examine ensemble controllability through classical controllability, and the tool provided by this technique to bridge the gap between these two types of controllability is reparameterization of ensemble systems by the eigenvalue functions of their system matrices. The main idea can be well illuminated by comparing the reparameterization of the systems in \eqref{eq:uncontrollable_2d} and \eqref{eq:controllable_2d} by their eigenvalue functions $\eta_1=\l_1(\b)$ and $\eta_2=\l_2(\b)$ as 

\begin{equation}
\label{eq:uncontrollable_2d_reparam}
\frac{d}{dt}x(t,\eta)=\left[\begin{array}{cc} \eta_1 & 0 \\ 0 & \eta_2 \end{array}\right]x(t,\eta)+\left[\begin{array}{c} 1  \\ 1 \end{array}\right]u(t)
\end{equation}
and
\begin{equation}
\label{eq:controllable_2d_reparam}
\frac{d}{dt}x(t,\eta)=\left[\begin{array}{cc} \eta_1 & 0 \\ 0 & \eta_2 \end{array}\right]x(t,\eta)+\left[\begin{array}{c} 1  \\ 1 \end{array}\right]u(t)+\left[\begin{array}{c} 0  \\ 1 \end{array}\right]v(t),
\end{equation}
in which the new parameter vector $\eta=(\eta_1,\eta_2)$ takes values on the product of the ranges of $\l_1$ and $\l_2$ as $\l_1([1,2])\times\l_2([1,2])=[1,2]\times[2,4]$. Notice that each individual system in the ensemble \eqref{eq:controllable_2d_reparam} is controllable on $\mathbb{R}^2$, implied by the full rank of the controllability matrix
$$\left[\begin{array}{cccc} 1 & 0 & \eta_1 & 0 \\ 1 & 1 & \eta_2 & \eta_2 \end{array}\right].$$
On the contrary, in the ensemble \eqref{eq:uncontrollable_2d_reparam}, the system indexed by the parameter in the shared spectrum $\eta_1=\eta_2=2$ is not controllable on $\mathbb{R}^2$,  because the controllability matrix for this individual system 
$$\left[\begin{array}{cc} 1 & 2  \\ 1 & 2 \end{array}\right]$$
is of rank $1<2$. The above results, i.e., classical controllability and uncontrollability of individual systems in the reparamterized ensembles in \eqref{eq:controllable_2d_reparam} and \eqref{eq:uncontrollable_2d_reparam}, respectively, are exactly consist with uniform ensemble controllability and uncontrollability of the ensemble systems in \eqref{eq:controllable_2d} and \eqref{eq:uncontrollable_2d}, respectively, in the original parameterization. This conclusion is rigorously formulated in the following proposition.

\begin{proposition}
\label{prop:separating_pts}
	Given a time-invariant linear ensemble system defined on $C(K,\mathbb{R}^n)$ in the form of \eqref{eq:ensemble_linear},
	$$\frac{d}{dt}x(t,\b)=A(\b)x(t,\b)+B(\b)u(t),$$
	where the parameter space $K\subset\mathbb{R}$ is compact, $u:[0,T]\rightarrow\mathbb{R}^m$ is piecewise constant, $A\in C(K,\mathbb{R}^{n\times n})$ is diagonalizable with real eigenvalue functions $\l_1$,$\dots,\l_n\in C(K,\mathbb{R})$, and $B\in C(K,\mathbb{R}^{n\times m})$. Then, the following are equivalent. 
	\begin{enumerate}
	\item The system is uniformly ensemble controllable on $C(K,\mathbb{R}^n)$.
	\item The corresponding diagonalized system 
	\begin{equation}
		\label{eq:diagonal}
		\frac{d}{dt}y(t,\b)=\Lambda(\b)y(t,\b)+\widetilde{B}(\b)u(t),
	\end{equation}
	where $\Lambda(\b)={\rm diag}(\l_1(\b),\dots,\l_n(\b))$, is uniformly ensemble controllable on $C(K,\mathbb{R}^n)$.
	\item The system obtained by parameterizing the system of $y(t,\b)$ in \eqref{eq:diagonal} by the eigenvalue functions $\eta_i=\l_i(\b)$, $\dots$, $\eta_n=\l_n(\b)$, i.e., 
		\begin{align}
	\label{eq:reparameterization}
		\frac{d}{dt}\left[\begin{array}{c} z_1(t,\eta_1) \\ \vdots \\  z_n(t,\eta_n) \end{array}\right]=\left[\begin{array}{c} \eta_1z_1(t,\eta_1) \\ \vdots \\ \eta_nz_n(t,\eta_n)\end{array}\right]+\left[\begin{array}{c} D_1(\eta_1) \\ \vdots \\ D_n(\eta_n) \end{array}\right]u(t)
	\end{align}
	\end{enumerate}
	is controllable on $\mathbb{R}^N$ for each $n$-tuple $(\eta_1,\dots,\eta_n)\in K_1\times\cdots\times K_n$, where $K_i=\l_i(K)$, $z_i(t,\eta_i)\in\mathbb{R}^{\kappa_i(\eta_i)}$, $N=\sum_{i=1}^n\kappa_i(\eta_i)$, $\kappa_i(\eta_i)$ denotes the cardinality of the inverse image $\lambda_i^{-1}(\eta_i)=\{\b_{\eta_i}^1,\dots,\b_{\eta_i}^{\kappa_i(\eta_i)}\}$ of $\eta_i\in K_i$ under $\lambda_i$, $I_{\kappa_i(\eta_i)}$ is the $\kappa_i(\eta_i)\times\kappa_i(\eta_i)$ identity matrix, and 
	$$D_i(\eta_i)=\left[\begin{array}{c} \tilde b_i(\b_{\eta_i}^1)  \\ \vdots \\ \tilde b_i(\b_{\eta_i}^{\kappa_i(\eta_i)}) \end{array}\right]\in\mathbb{R}^{\kappa_i(\eta_i)\times m}$$
	is called the \emph{Ensemble Controllability Criterion Matrix} associated with the $i^{\rm th}$ state
	$$\frac{d}{dt}y_i(t,\b)=\l_i(\b)y_i(t,\b)+\tilde{b}_i(\b)u(t)$$
	of the ensemble system in \eqref{eq:diagonal}, and $\tilde{b}_i(\b)$ denotes the $i^{\rm th}$ row of $\widetilde{B}(\beta)$.	
\end{proposition}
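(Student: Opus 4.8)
The plan is to establish the two links (1)$\Leftrightarrow$(2) and (2)$\Leftrightarrow$(3); chaining them yields the stated equivalence. For (1)$\Leftrightarrow$(2) I would use the diagonalizability hypothesis to pick $P\in C(K,\mathbb{R}^{n\times n})$ with $P(\b)$ invertible for every $\b$ and $P^{-1}AP=\Lambda$, so that the linear change of state $y=P^{-1}x$ conjugates \eqref{eq:ensemble_linear} to the diagonal system \eqref{eq:diagonal} with $\widetilde B=P^{-1}B$. Since $K$ is compact and $P,P^{-1}$ are continuous, $\|P\|_\infty$ and $\|P^{-1}\|_\infty$ are finite, so $x\mapsto P^{-1}x$ is a bi-Lipschitz linear homeomorphism of $(C(K,\mathbb{R}^n),\|\cdot\|_\infty)$ that intertwines the two flows and sends target functions to target functions; an $\e$-approximation for one system then yields a $\|P^{\pm1}\|_\infty\e$-approximation for the other, and (1)$\Leftrightarrow$(2) follows. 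The point needing care is the continuity of $P$ on all of $K$, which is exactly what the diagonalizability hypothesis must supply.

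For (2)$\Leftrightarrow$(3) I would pass to a density-plus-duality formulation. The reachable subspace from the origin of \eqref{eq:diagonal} is the uniform closure of $\L=\mathrm{span}\{\Lambda^k\widetilde B e_j:k\ge0,\ 1\le j\le m\}\subset C(K,\mathbb{R}^n)$, whose $i$-th entry is spanned by the scalars $\l_i^k\widetilde B_{ij}$, so (2) is equivalent to $\overline{\L}=C(K,\mathbb{R}^n)$. By Hahn--Banach and the Riesz representation theorem this fails precisely when some nonzero tuple of finite signed measures $\mu=(\mu_1,\dots,\mu_n)$ annihilates $\L$, i.e. $\sum_i\int_K\l_i^k\widetilde B_{ij}\,\rd\mu_i=0$ for all $k,j$. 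Forming the push-forward vector measures $\Theta_j=\sum_i(\l_i)_*(\widetilde B_{ij}\mu_i)$ on $\mathbb{R}$, these relations say exactly that every moment of each $\Theta_j$ vanishes; as $\bigcup_i\l_i(K)$ is compact, Weierstrass approximation then forces $\Theta_j\equiv0$ for every $j$.

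It remains to read $\Theta\equiv0$ off the matrices $D_i$. Its atomic part at a point $\sigma$ gives $\sum_i\sum_{\b\in\l_i^{-1}(\sigma)}\mu_i(\{\b\})\,\tilde b_i(\b)=0$, a left-null relation for the stacked matrix $D(\sigma)$ whose rows are the $\tilde b_i(\b)$ over all branches meeting $\sigma$. Decomposing each $\l_i$ into its finitely many injective branches with continuous inverses $\phi_{ia}$, the continuous part of $\Theta_j\equiv0$ becomes $\sum_{(i,a)}\widetilde B_{ij}(\phi_{ia}(\cdot))\,\rd\nu_{ia}=0$ for the branch push-forwards $\nu_{ia}$, whose coefficient matrix on each interval of constant branch structure is again $D(\sigma)$. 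A Hautus test identifies controllability of the block system in (3) at a tuple $\eta$ with full row rank of the $\tilde b_i(\b)$ stacked over branches sharing each eigenvalue; letting $\eta$ range over $K_1\times\cdots\times K_n$ shows (3) is equivalent to $D(\sigma)$ having full row rank for every $\sigma\in\bigcup_i\l_i(K)$. Granting this, the atomic relation forces $\mu$ to be non-atomic, and inverting the pointwise full-column-rank system $D(\sigma)^{\mathsf T}\rd\nu=0$ on each branch interval forces every $\nu_{ia}$, hence $\mu$, to vanish; conversely a rank drop of some $D(\sigma)$ produces an explicit atomic annihilator. Thus no nonzero annihilator exists iff (3) holds, giving (2)$\Leftrightarrow$(3).

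I expect the main obstacle to be the continuous part of the annihilating measures in the last step. The atomic part collapses at once to a finite linear relation, but excluding non-atomic annihilators requires the branch decomposition of each eigenvalue function -- hence the standing regularity that each $\l_i$ has only finitely many injective branches and finite fibers -- together with a measure-theoretic inversion showing that pointwise full column rank of $D(\sigma)^{\mathsf T}$, continuous in $\sigma$, kills the vector measure $(\nu_{ia})$ on each interval of constant branch structure. Patching the finitely many such intervals and matching the outcome with the Hautus characterization of (3) is the delicate part that makes the separating-point mechanism rigorous.
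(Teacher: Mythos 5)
The first thing to note is that the paper does not actually prove Proposition~\ref{prop:separating_pts}: its ``proof'' is a citation to \cite{Li_SICON20}. So there is no in-paper argument to compare yours against line by line; I can only assess your proposal on its own terms and against the way the paper uses the result. On those terms your strategy is coherent and, in outline, correct. The chain (1)$\Leftrightarrow$(2) by a continuous similarity, (2)$\Leftrightarrow$(3) by identifying the reachable set with $\overline{\mathrm{span}}\{\Lambda^k\widetilde B e_j\}$, dualizing via Hahn--Banach/Riesz, pushing the annihilating measures forward along the eigenvalue functions, killing all moments by Weierstrass, and then matching the resulting rank conditions on the stacked matrices $D(\sigma)$ with the Hautus test for the block system in (3) is a legitimate self-contained route, and your Hautus reduction (controllability of \eqref{eq:reparameterization} for all $\eta$ is equivalent to full row rank of the stack of all $\tilde b_i(\beta)$, $\beta\in\lambda_i^{-1}(\sigma)$, for every $\sigma$) is exactly right.

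Three places where the sketch leans on something you should make explicit. First, in (1)$\Leftrightarrow$(2) you need ``diagonalizable'' to mean \emph{continuously} diagonalizable, i.e.\ the existence of $P\in C(K,\mathrm{GL}(n,\mathbb{R}))$ with $P^{-1}AP=\Lambda$; pointwise diagonalizability of each $A(\beta)$ does not by itself produce a continuous eigenvector selection when eigenvalue functions cross, so this is a hypothesis you are consuming, not deriving (the paper's Remark~\ref{rmk:Sobolev} indicates this is the intended reading). Second, the identification of the reachable set of \eqref{eq:diagonal} with the uniform closure of $\mathrm{span}\{\Lambda^k\widetilde B e_j\}$ is itself a nontrivial infinite-dimensional fact (Triggiani-type approximate controllability); the paper invokes it silently in its examples, so this is acceptable, but it is an unproved ingredient of your argument. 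Third, the whole branch/fiber analysis in the last step requires each $\lambda_i$ to have finitely many injective branches and finite fibers $\lambda_i^{-1}(\eta_i)$; this is only implicit in the statement (through the finiteness of $\kappa_i(\eta_i)$ needed for $D_i(\eta_i)$ to be a matrix), and without it the decomposition of $\Theta_j$ into an atomic part governed by $D(\sigma)$ plus branch push-forwards breaks down. Granting these, the remaining measure-theoretic step you flag as delicate does go through: with $\rho=\sum_{(i,a)}|\nu_{ia}|$ and Radon--Nikodym densities $g_{ia}$, the relation $g(\sigma)^{\mathsf T}M(\sigma)=0$ $\rho$-a.e.\ together with full row rank of $D(\sigma)$ (hence of its row-submatrices $M(\sigma)$, off a finite exceptional set of branch endpoints that is $\rho$-null because the $\nu_{ia}$ are non-atomic) forces $g=0$, and injectivity of each branch lets you pull $\nu_{ia}=0$ back to $\mu_i=0$. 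So I see no fatal gap, only standing hypotheses and standard facts that need to be stated rather than assumed.
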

\begin{proof}
See \cite{Li_SICON20}.
\end{proof}

It is worth noting that the dimension of the subsystem $z(t,\eta_i)$ of the system in \eqref{eq:reparameterization} is exactly equal to $\kappa(\eta_i)$, the number of the injective branches of the $i^{\rm th}$ eigenvalue function $\l_i$ of the system matrix of the system in \eqref{eq:diagonal}, and hence the system in \eqref{eq:ensemble_linear}. This further indicates that the reparameterization procedure unifies the two notions of separating points by transforming points in different injective branches to those in shared spectra. 

\begin{remark}
\label{rmk:Sobolev}
In addition to the diagonalizable case, Proposition \ref{prop:separating_pts} remains valid for non-Sobolev type linear ensemble systems with non-diagonalizable system matrices, guaranteed by the controllability equivalence between them and their diagonalizable counterparts \cite{}. To be more specific, given such a system in the form of \eqref{eq:ensemble_linear} whose system matrix $A\in C(K,\mathbb{R}^{n\times n})$ has real-valued eigenvalue functions $\l_1,\dots,\l_n\in C(K,\mathbb{R})$, then there exists $P\in C(K,{\rm GL}(n,\mathbb{R}))$ such that $T=P^{-1}AP\in C(K,\mathbb{R}^{n\times n})$ is upper triangular with the diagonal entries $\l_1,\dots,\l_n$, where ${\rm GL}(n,\mathbb{R})$ is the general linear group consisting of $n$-by-$n$ invertible real matrices. Then, its diagonalizable counterpart is defined as the linear ensemble system whose system and control matrices are ${\rm diag}(\l_1,\dots,\l_n)$ and $P^{-1}B$, respectively, and this diagonalized system is uniformly ensemble controllable on $C(K,\mathbb{R}^n)$ if and only if the original system in \eqref{eq:ensemble_linear} is uniformly ensemble controllable on $C(K,\mathbb{R}^n)$. A typical class of non-Sobolev type linear ensemble systems are those with constant, i.e., parameter-independent, control matrices when the system matrices are transformed to the upper triangular form. 
\end{remark}

\section{Rational Canonical Forms for Separating Points}
\label{sec:canonical}

As indicated by Proposition \ref{prop:separating_pts} that uniform ensemble controllability can be examined by classical controllability, the technique of separating points provides a powerful tool for tackling control and analysis tasks for infinite-dimensional systems by utilizing finite-dimensional methods. For example, in classical linear systems theory, transforming a finite-dimensional linear system to its controllability canonical form is an effective way to reveal the properties concerning the dynamics of the system, such as controllability and the characteristic polynomial (of its system matrix). Then, the technique of separating points opens up the possibility for the adoption of classical controllability canonical forms to analyze uniform ensemble controllability and spectra of infinite-dimensional linear ensemble systems, which will be the focus of the remaining of the paper.

\subsection{Separating Point Interpretation of the Rational Canonical Form}
\label{sec:rational}

It is well-known that the system matrix of a classical linear system in the controllability canonical form is the Frobenius companion matrix of its characteristic polynomial, but only those systems that can be controllable by single-inputs admit coordinates under which the system matrices are in the companion form \cite{Brockett70}. To extend the classical controllability canonical form to multi-input linear systems, especially, for the purpose of representing uniform ensemble controllability, it is natural to start from the study of the generalized companion form for matrices, called the rational canonical form or Frobenius normal form. In particular, we will interpret the rational canonical form, especially, for diagonalizable matrices, from the perspective of separating points, which in turn motivates its utilization in establishing the uniform ensemble controllability canonical form.

A matrix in the rational canonical form is block diagonal with each block a companion matrix. To reveal the idea of separating points hidden in the rational canonical form, we consider a diagonalized linear system defined on $\mathbb{R}^n$,
\begin{align}
\label{eq:linear_diag}
\frac{d}{dt}x(t)=\Lambda x(t)+Bu(t),
\end{align}
where $\Lambda={\rm diag}(\l_1,\dots,\l_n)\in\mathbb{R}^{n\times n}$ is the system matrix, and $B\in\mathbb{R}^{n\times m}$ is the control matrix. If the system in \eqref{eq:linear_diag} is controllable and $m=1$, i.e., it is a single-input system, then the controllability matrix $P=\left[\begin{array}{cccc} b\mid \Lambda b\mid\cdots\mid \Lambda^{n-1}b \end{array}\right]\in\mathbb{R}^{n\times n}$ is full rank and transforms the system to the controllability canonical form, that is, the linear system with the system and control matrices 
$$C=P^{-1}AP=\left[\begin{array}{ccccc} 0 & 0 & \cdots & 0 & -c_0 \\ 1 & 0 & \cdots & 0 & -c_1 \\ 0 & 1 & \cdots & 0 & -c_2 \\ \vdots & \vdots & \ddots & \vdots & \vdots \\ 0 & 0 & \cdots & 1 & -c_{n-1}\end{array}\right]$$
and
$$\widetilde B=P^{-1}B=\left[\begin{array}{c} 1 \\ 0 \\ \vdots \\ 0 \\ 0 \end{array}\right],$$
respectively, and in particular, $C$ is the \emph{companion matrix} of the monic polynomial polynomial $c_\Lambda(\l)=\l^n+c_{n-1}\l^{n-1}+\cdots+c_1\l+c_0$, which is exactly the characteristic polynomial of the original system matrix $\Lambda$ \cite{Brockett70}. From the perspective of separating point, in this case, the system matrix $\Lambda$ of the original system in \eqref{eq:linear_diag} cannot have shared spectrum in the sense that $\Lambda$ does not have repeated eigenvalue. Otherwise, say $\l_i=\l_j=\eta$ for some $i\neq j$, the subsystem of the system in  \eqref{eq:linear_diag} consisting of the states $x_i$ and $x_j$
\begin{align*}
\frac{d}{dt}x_i(t)&=\eta x_i(t)+b_iu(t),\\
\frac{d}{dt}x_j(t)&=\eta x_j(t)+b_ju(t),
\end{align*}
where $b_i$ and $b_j$ denote the $i^{\rm th}$ and $j^{\rm th}$ rows of $B$, respectively, is not controllable on $\mathbb{R}^2$, because its controllability matrix 
$$\left[\begin{array}{cc}  b_i & \eta b_i \\ b_j & \eta b_j \end{array}\right]$$
is of rank $1<2$, which then contradicts controllability of the whole system in \eqref{eq:linear_diag} on $\mathbb{R}^n$. Algebraically, this further indicates that a necessary condition for a diagonalizable matrix to admit the companion form is the non-existence of repeated eigenvalues. Similarly, when transforming a diagonalizable matrix with repeated eigenvalues to its rational canonical form, each of the companion block must contain distinct eigenvalues and different companion blocks have to share common eigenvalues. Then, to control a linear system with the system matrix similar to a rational canonical form with more than one companion blocks, the key idea to separate these companion blocks by using control inputs. 

Given a linear system defined on $\mathbb{R}^n$
\begin{align}
\label{eq:linear_classical}
\frac{d}{dt}x(t)=Ax(t)+Bu(t),
\end{align}
to see how the repeated eigenvalues of $A\in\mathbb{R}^{n\times n}$ distribute into different companion blocks in its rational canonical form for the purpose of adopting the separating point technique to separate them, we give an algebraic interpretation of the dynamics of the system. In particular, we consider the state-space $\mathbb{R}^n$ of the system in \eqref{eq:linear_classical} as a module over $\mathbb{R}[\l]$, the ring of polynomials with real coefficients, and translate the dynamics of the system into the action of the intermediate variable $\l$ on $\mathbb{R}^n$ by the linear transformation $A$, i.e., $\l\cdot x=Ax$ for all $x\in\mathbb{R}^n$, which in turn gives a concrete description of the module structure as $p(\l)\cdot x=p(A)x$ for $p(\l)\in\mathbb{R}[\l]$. Because $\mathbb{R}^n$ has finite dimension $n$ as a vector space over $\mathbb{R}$, it is certainly finitely generated as a $\mathbb{R}[\l]$-module as well. However, $\mathbb{R}[\l]$ is an infinite-dimensional vector space over $\mathbb{R}$, and hence so is every free $\mathbb{R}[\l]$-module, which then implies that $\mathbb{R}^n$ must be a torsion $\mathbb{R}[\l]$-module. By the fundamental theorem of finitely generated modules over principal ideal domains, the dynamics of the system in \eqref{eq:linear_classical}, i.e., the action $\l\cdot x=Ax$, decomposes the state-space $\mathbb{R}^n$ into a direct sum of cyclic modules, called the \emph{invariant factor decomposition}, as
\begin{align}
\label{eq:invariant_factor_decompostion}
\mathbb{R}^n=\mathbb{R}[\l]/\<a_1(\l)\>\oplus\cdots\oplus\mathbb{R}[\l]/\<a_k(\l)\>,
\end{align}
where $a_1(\l)$, $\dots$, $a_k(\l)\in\mathbb{R}[\l]$ are monic polynomials of degree at least 1, called the \emph{invariant factors} of $A$, satisfy the divisibility condition $a_{i+1}(\l)\mid a_{i}(\l)$, i.e., $a_{i+1}(\l)$ divides $a_{i}(\l)$, for all $i=1,\dots,k-1$, and $\<a_i(\l)\>=\big\{p(\l)\in\mathbb{R}[\l]:p(\l)=a_i(\l)q(\l)\text{ for some }q(\l)\in\mathbb{R}[\l]\big\}$ denotes the ideal in $\mathbb{R}[\l]$ generated by $a_i(\l)$ \cite{Lang02}. Moreover, each cyclic module $\mathbb{R}[\l]/\<a_i(\l)\>$ in the decomposition in \eqref{eq:invariant_factor_decompostion} is indeed a vector subspace of $\mathbb{R}^n$. To see this, we apply the Euclidean algorithm to divide an arbitrary polynomial $p(\l)\in\mathbb{R}[\l]$ by $a_i(\l)$, which results in $p(\l)=a_i(\l)q(\l)+r(\l)$ for some $q(\l),r(\l)\in\mathbb{R}[\l]$ so that the degree of $r(\l)$ is less than the degree $n_i$ of $a_i(\l)$, or equivalently, $p(\l)=r(\l)$ mod $a_i(\l)$. Because $a_i(\l)q(\l)\in\<a_i(\l)\>$, the quotient ring $\mathbb{R}[\l]/\<a_i(\l)\>$ can be identified with the subring of $\mathbb{R}[\l]$ consisting of polynomials with degree less than $n_i$, which is an $n_i$ dimensional vector space over $\mathbb{R}$.

Recall that the monomial $\l$ acts on $\mathbb{R}^n$ by the linear transformation $A$, together with $a_i(\l)=0$ mod $a_i(\l)$, we obtain $a_i(A)x=0$ for all $x\in\mathbb{R}[\l]/\<a_i(\l)\>$. Then, because $a_i(\l)\mid a_1(\l)$ for all $i=1,\dots,k$, we have $a_1(A)x=0$ for all $x\in\mathbb{R}^n$, which implies $a_1(A)=0$, the trivial linear transformation mapping every element in $\mathbb{R}^n$ to 0. Moreover, the divisibility condition of the invariant factors guarantees that $a_1(\l)$ is the polynomial with the minimal degree annihilating $\mathbb{R}^n$, hence it is also called the \emph{minimal polynomial} of $A$ and also denoted by $m_A(\l)$. On the other hand, the multiplication of the invariant factors $\prod_{i=1}^ka_i(\l)$ is a polynomial of degree $\sum_{i=1}^kn_i=n$, the dimension of $\mathbb{R}^n$, annihilating $\mathbb{R}^n$, and hence it must coincide with the characteristic polynomial $c_A(\l)$ of $A$.

The invariant factor decomposition of $\mathbb{R}^n$ in \eqref{eq:invariant_factor_decompostion} further provides a firm evidence for presenting the linear transformation $x\mapsto\lambda\cdot x=Ax$ as a block diagonal matrix so that each subspace $\mathbb{R}[\l]/\<a_i(\l)\>$ in this decomposition is invariant under the action of $\mathbb{R}[\l]$ on $\mathbb{R}^n$. The remaining task is to find a basis for $\mathbb{R}^n$ consisting of the bases for these subspaces $\mathbb{R}[\l]/\<a_i(\l)\>$, under which the matrix representation of the linear transformation $A$ will be a block diagonal matrix with the blocks representing its restriction to these subspaces. To this end, leveraging the cyclic structure of the $n_i$ dimension subspace $\mathbb{R}[\l]/\<a_i(\l)\>$, we can choose $1$ mod $a_i(\l)$, $\l$ mod $a_i(\l)$, $\dots$, $\l^{n_i-1}$ mod $a_i(\l)$ as its basis, which, in terms of the action of $A$ on $\mathbb{R}^n$, can be represented by 
$b_i$, $Ab_i$, $\dots$, $A^{n_i-1}b_i$ for some $b_i\in\mathbb{R}^n$. Let $a_i(\l)=\l^{n_i}+c_{in_i-1}\l^{n_i-1}+\dots+c_{i1}\l+c_{i0}$, then the action of $A$ restricted to $\mathbb{R}[\l]/\<a_i(\l)\>$ on this basis is given by
\begin{align*}
b_i\ &\mapsto\ Ab_i\\
Ab_i\ &\mapsto\ A^2b_i\\
&\ \vdots\nonumber\\
A^{n_i-2}b_i\ &\mapsto\ A^{n_i-1}b_i\\
A^{n_i-1}b_i\ &\mapsto\ A^{n_i}b_i=-\sum_{j=o}^{n_i-1}c_{ij}A^{j}b_j
\end{align*}
where the last step follows from $a_i(A)b_i=0$ and also guarantees the invariance of the space spanned by this basis under the action of $A$. Consequently, the matrix representation of $A$ restricted to $\mathbb{R}[\l]/\<a_i(\l)\>$ under this basis is the companion matrix of the polynomial $a_i(\l)$ as 
$$C_i=\left[\begin{array}{ccccc} 0 & 0 & \cdots & 0 & -c_{i0} \\ 1 & 0 & \cdots & 0 & -c_{i1} \\ 0 & 1 & \cdots & 0 & -c_{i2} \\ \vdots & \vdots & \ddots & \vdots & \vdots \\ 0 & 0 & \cdots & 1 & -c_{in_i-1}\end{array}\right].$$
Repeating this procedure for all $i=1,\dots,k$, since $n_1+\dots+n_k=n$, we obtain a basis for $\mathbb{R}^n$ in the form of $b_1$, $Ab_1$, $\dots$, $A^{n_1-1}b_1$, $\dots$, $b_k$, $Ab_k$, $\dots$, $A^{n_k-1}b_k$ so that $\mathbb{R}[\l]/\<a_i(\l)\>={\rm span}\{b_i,Ab_i,\dots,A^{n_i-1}b_i\}$ holds for each $i$. The $A$-invariance property of these subspaces $\mathbb{R}[\l]/\<a_i(\l)\>$ then warrantees the block diagonal form of the matrix representation of $A$ under this basis of $\mathbb{R}^n$ as
$$C=\left[\begin{array}{ccc} C_1 & & \\ & \ddots & \\ & & C_k \end{array}\right],$$
namely, the \emph{rational canonical from} of $A$. Computationally, the matrix $P=\big[b_1\mid Ab_1\mid\cdots\mid A^{n_1-1}b_1\mid\dots\mid b_k\mid Ab_k\mid\cdots\mid A^{n_k-1}b_k\big]$, containing the resulting basis as its column vectors, transforms $A$ to its rational canonical form as $C=P^{-1}AP$.

As discussed previously, in the case that $A$ is diagonalizable, each of the companion blocks $C_i$ in its rational canonical form $C$ cannot have repeated eigenvalues, and hence each invariant factor $a_i(\l)$ of $A$ only has simple roots as well. This in turn implies that the repeated eigenvalues of $A$ must be uniformly distributed across the $k$ companion blocks of $C$. Specifically, if $\eta$ is an eigenvalue of $A$ with the algebraic multiplicity $l\leq k$, i.e., $\eta$ is a multiplicity $l$ root of the characteristic polynomial $c_A(\l)=\prod_{i=1}^na_i(\l)$, then the divisibility condition $a_{i+1}(\l)\mid a_i(\l)$ of the invariant factors implies that $\eta$ is the common simple root to $a_1(\l)$, $\dots$, $a_l(\l)$, and correspondingly, $\eta$ is the common multiplicity 1 eigenvalue to the first $l$ companion blocks $C_1$, $\dots$, $C_l$. Especially, the fact of the rational canonical form of $A$ containing $k$ companion blocks indicates that $k$ is the maximal algebraic multiplicity for the eigenvalues of $A$, or in the terminology of separating points, $A$ has $k$ shared spectra. This nature of the rational canonical form, i.e., decoding the the information of shared spectra in terms of companion blocks, immediately stresses its importance in the study of uniform ensemble controllability by using the technique of separating points. To this end, motivated by the equivalence between ensemble and classical controllability revealed in Proposition \ref{prop:separating_pts}, we first turn our attention to the role of the rational canonical form in classical controllability. On the other hand, this investigation will further indicate the broad applicability of the technique of separating points in linear systems theory, including both classical and ensemble linear systems. 

\subsection{Rational Canonical Form and Classical Controllability}
\label{sec:canoncial_single}

In the previous section, we investigated the impact of the dynamics of a linear system on the algebraic structure of its state-space, which, in particular, leads to the invariant factor decomposition of the state-space along with the rational canonical form of the system matrix. The focus now is shifted to the integration of these results with the technique of separating points to study controllability of linear systems. 

To motivate the idea, we still start from a linear system defined on $\mathbb{R}^n$ in the form of \eqref{eq:linear_classical} whose system matrix $A\in\mathbb{R}^{n\times n}$ is similar to a companion matrix. 

\begin{lemma}
\label{lem:single_input}
Given a time-invariant linear system defined on $\mathbb{R}^n$
$$\frac{d}{dt}x(t)=Ax(t)+Bu(t)$$
with $A\in\mathbb{R}^{n\times n}$ similar to a companion matrix and $B\in\mathbb{R}^{n\times m}$. Then, the system is controllable on $\mathbb{R}^n$ if and only if there exists $b\in{\rm Im}(B)$ generating $\mathbb{R}^n$ under the action of $A$, i.e., ${\rm span}\{b,Ab,\dots,A^{n-1b}\}=\mathbb{R}^n$, where ${\rm Im}(B)=\big\{y\in\mathbb{R}^n:y=Bx\text{ for some }x\in\mathbb{R}^m\big\}$ denotes the range space of $B$.
\end{lemma}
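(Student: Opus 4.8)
The plan is to recast both controllability and the cyclic-generation condition in the module-theoretic language of Section~\ref{sec:rational}, where $\mathbb{R}^n$ carries the $\mathbb{R}[\l]$-module structure $\l\cdot x=Ax$. First I would record three translations. The hypothesis that $A$ is similar to a companion matrix is equivalent to $\mathbb{R}^n$ having a single invariant factor in the decomposition \eqref{eq:invariant_factor_decompostion}, i.e. $\mathbb{R}^n\cong\mathbb{R}[\l]/\<c_A(\l)\>$ is a \emph{cyclic} module (minimal polynomial equals characteristic polynomial). The condition ${\rm span}\{b,Ab,\dots,A^{n-1}b\}=\mathbb{R}^n$ says exactly that $b$ is a \emph{cyclic vector}, i.e. a single-element generator of this module. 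Finally, by the reachable-subspace form of the Kalman criterion, controllability of $(A,B)$ is equivalent to the statement that ${\rm Im}(B)$ generates $\mathbb{R}^n$ as an $\mathbb{R}[\l]$-module, since the reachable subspace $\sum_{k\ge 0}A^k{\rm Im}(B)$ is the smallest $A$-invariant subspace containing ${\rm Im}(B)$. With these identifications the lemma becomes: ${\rm Im}(B)$ generates the cyclic module $\mathbb{R}^n$ if and only if ${\rm Im}(B)$ contains a single generator.

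The reverse implication is immediate and does not even use cyclicity of $A$. If $b\in{\rm Im}(B)$ is cyclic, then the reachable subspace, being $A$-invariant and containing $b$, contains every $A^k b$ and hence their span $\mathbb{R}^n$; thus $(A,B)$ is controllable.

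For the forward implication, which carries the content of the lemma, I would argue by contradiction using the real primary decomposition of the cyclic module. Write $c_A(\l)=\prod_{j=1}^r q_j(\l)^{e_j}$ with the $q_j$ distinct monic irreducibles over $\mathbb{R}$, yielding the $A$-invariant decomposition $\mathbb{R}^n=\bigoplus_{j=1}^r W_j$ with $W_j=\ker q_j(A)^{e_j}$, each $W_j$ itself cyclic because $A$ is. By the Chinese Remainder Theorem a vector $v=\sum_j v_j$ is cyclic iff every component $v_j$ generates $W_j$, and the non-generators of $W_j$ form precisely the proper $A$-invariant subspace $q_j(A)W_j\subsetneq W_j$. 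Consequently the set of \emph{non-cyclic} vectors is the finite union $\bigcup_{j=1}^r S_j$ of the proper $A$-invariant subspaces $S_j=q_j(A)W_j\oplus\bigoplus_{j'\neq j}W_{j'}$. Now suppose $(A,B)$ is controllable but no element of ${\rm Im}(B)$ is cyclic; then ${\rm Im}(B)\subseteq\bigcup_j S_j$. Since $\mathbb{R}$ is infinite, a subspace contained in a finite union of subspaces must lie in one of them, so ${\rm Im}(B)\subseteq S_{j_0}$ for some $j_0$. As $S_{j_0}$ is $A$-invariant and proper, the submodule it generates, hence the reachable subspace, cannot be all of $\mathbb{R}^n$, contradicting controllability.

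I expect the main obstacle to be exactly the passage from ``${\rm Im}(B)$ generates $\mathbb{R}^n$'' to ``${\rm Im}(B)$ contains a single generator.'' The genericity statement that a generic element of ${\rm Im}(B)$ is cyclic is true but cannot be extracted from a naive determinant-vanishing argument, because the cyclicity determinant $\det[v\mid Av\mid\cdots\mid A^{n-1}v]$ has degree $n$ rather than $1$, so its vanishing on ${\rm Im}(B)$ does not by itself confine ${\rm Im}(B)$ to a hyperplane. The cyclicity hypothesis on $A$ is precisely what reduces non-cyclicity to a union of \emph{subspaces}, after which the covering lemma over the infinite field $\mathbb{R}$ closes the argument. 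In writing this up I would take care to verify that each $S_j$ is genuinely proper (using $q_j(A)W_j\subsetneq W_j$, valid since $q_j(A)$ is singular on $W_j$) and $A$-invariant, so that the final contradiction with controllability is legitimate.
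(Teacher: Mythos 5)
Your proposal is correct, and it sets up the problem exactly as the paper does: view $\mathbb{R}^n$ as the cyclic $\mathbb{R}[\l]$-module $\mathbb{R}[\l]/\<c_A(\l)\>$, observe that the easy direction (a cyclic vector in ${\rm Im}(B)$ forces the reachable subspace to be everything) needs nothing more than $A$-invariance of the reachable subspace, and identify the real content as the converse. Where you diverge is precisely on that converse. The paper disposes of it in one sentence: if no element of ${\rm Im}(B)$ is cyclic, then the minimal polynomial of $A$ restricted to ${\rm Im}(W)$ ``must have degree less than $n$,'' hence ${\rm Im}(W)\subsetneq\mathbb{R}^n$. That assertion is exactly the nontrivial implication restated, and the paper does not justify it; the standard existence of a vector of maximal order only produces such a vector somewhere in ${\rm Im}(W)$, not in the linear span ${\rm Im}(B)$. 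Your argument supplies the missing justification by a different route: the real primary decomposition $\mathbb{R}^n=\bigoplus_j W_j$ with $W_j=\ker q_j(A)^{e_j}$, the characterization of non-generators of each cyclic primary component as the proper $A$-invariant subspace $q_j(A)W_j$, and the covering lemma that a subspace over an infinite field contained in a finite union of subspaces lies in one of them, which then contradicts controllability because each $S_{j_0}$ is $A$-invariant and proper. This buys a complete, self-contained proof of the direction the paper only sketches, at the cost of invoking the primary decomposition rather than staying entirely with the single invariant factor; your closing caveat about why a naive determinant-vanishing genericity argument would not suffice is also well taken, since without the $A$-invariance of the covering subspaces one cannot derive a contradiction with controllability. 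No gaps; if anything, your write-up is the more rigorous of the two.
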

\begin{proof}
Regarding a companion matrix as a matrix in the rational canonical form with only one companion block, we conclude that $A$ has only one invariant factor, that is, its characteristic polynomial $c_A(\l)$, which is also equal to its minimal polynomial $m_A(\l)$. Then, the invariant factor decomposition of $\mathbb{R}^n$  under the dynamics of the system, i.e., the action of $A$ on $\mathbb{R}^n$ only has one component as
$$\mathbb{R}^n=\mathbb{R}[\l]/\<c_A(\l)\>.$$
This then indicates that the dynamics of the system induces a global cyclic structure on $\mathbb{R}^n$ so that there exists some $b\in\mathbb{R}^n$ generating $\mathbb{R}^n$, namely, $b$, $Ab$, $\dots$, $A^{n-1}b$ form a basis of $\mathbb{R}^n$. 

Necessity: If $b\in{\rm Im}(B)$, then the space generated by ${\rm Im}(B)$ under the action of $A$ is the whole space $\mathbb{R}^n$, which is exactly equal to the range space ${\rm Im}(W)$ of the controllability matrix $W=\big[\ B\mid AB\mid \cdots\mid A^{n-1}B\ \big]$ of the system. Equivalently,  the rank of $W$ is $n$, which implies controllability of the system on $\mathbb{R}^n$.

Sufficiency: if ${\rm Im}(B)$ does not contain any element generating $\mathbb{R}^n$ under the action of $A$, the minimal polynomial of the linear operator $A|_{{\rm Im}(W)}$, the restriction of $A$ on the space ${\rm Im}(W)\subseteq\mathbb{R}^n$ generated by ${\rm Im}(B)$, must have degree less than the degree $n$ of the minimal polynomial of $A$. Therefore, ${\rm Im}(W)$ is a proper subspace of $\mathbb{R}^n$, which results in uncontrollability of the system.   
\end{proof}

Lemma \ref{lem:single_input} casts a glance at how the interaction between the algebraic structure of the state-space of a linear system, induced by the system dynamics, and the control matrix determines controllability of the system. As a direct consequence, a linear system whose system matrix in the rational canonical form contains a single companion block requires only one control input to be controllable. By thinking about the function of control inputs as separating shared spectra in system matrices as mentioned in Section \ref{sec:separating_pts}, the above observation reinforces the conclusion that no point in the shared spectra of companion matrices needs to be separated to guarantee controllability, and particularly, diagonalizable companion matrices do not have shared spectra as explained at the end of Section \ref{sec:rational}. Going along with this idea, intuitively, if the rational canonical form of the system matrix of a linear system contains more than one companion blocks, then, to guarantee controllability, the number of control inputs applied to the system should be no less than the number of companion blocks. The following proposition then verifies this intuition.

\begin{proposition}
\label{prop:number_of_inputs}
Consider a time-invariant linear system defined on $\mathbb{R}^n$ 
$$\frac{d}{dt}x(t)=Ax(t)+Bu(t),$$
where the rational canonical form of the system matrix $A\in\mathbb{R}^{n\times n}$ consists of $k$ companion blocks. If the system is controllable on $\mathbb{R}^n$, then the control matrix $B\in\mathbb{R}^{n\times m}$ satisfies $m\geq k$.
\end{proposition}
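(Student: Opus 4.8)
The plan is to recast the control-theoretic hypothesis in the module-theoretic language of Section~\ref{sec:rational} and then reduce the claim to a count of generators. First I would note that, by the Cayley--Hamilton theorem, every power $A^j$ with $j\geq n$ is an $\mathbb{R}$-linear combination of $I,A,\dots,A^{n-1}$; hence the column space of the controllability matrix $W=\big[\,B\mid AB\mid\cdots\mid A^{n-1}B\,\big]$ coincides with the $\mathbb{R}[\lambda]$-submodule of $\mathbb{R}^n$ generated by the $m$ columns $b^{(1)},\dots,b^{(m)}$ of $B$, where $\lambda$ acts by $\lambda\cdot x=Ax$ exactly as in Section~\ref{sec:rational}. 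Consequently, controllability of the system, i.e.\ $\mathrm{rank}\,W=n$, is equivalent to the assertion that $b^{(1)},\dots,b^{(m)}$ generate $\mathbb{R}^n$ as an $\mathbb{R}[\lambda]$-module. The proposition therefore reduces to proving that $\mathbb{R}^n$ cannot be generated by fewer than $k$ elements over $\mathbb{R}[\lambda]$.

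Second, I would invoke the invariant factor decomposition $\mathbb{R}^n=\mathbb{R}[\lambda]/\langle a_1(\lambda)\rangle\oplus\cdots\oplus\mathbb{R}[\lambda]/\langle a_k(\lambda)\rangle$ from \eqref{eq:invariant_factor_decompostion}, whose $k$ cyclic summands correspond precisely to the $k$ companion blocks of the rational canonical form of $A$ and whose invariant factors satisfy the divisibility chain $a_{i+1}(\lambda)\mid a_i(\lambda)$. The target is then to establish that the minimal number of generators of this module equals $k$; combined with the equivalence of the first step, this yields $m\geq k$ at once.

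The lower bound on the number of generators is the crux, and the step I expect to be the main obstacle. The idea is a reduction modulo a maximal ideal. I would pick a monic irreducible $p(\lambda)\in\mathbb{R}[\lambda]$ dividing the smallest invariant factor $a_k(\lambda)$, which exists since $\deg a_k\geq 1$; the divisibility chain $a_{i+1}\mid a_i$ then forces $p(\lambda)\mid a_i(\lambda)$ for every $i=1,\dots,k$. Reducing the decomposition modulo the maximal ideal $\langle p(\lambda)\rangle$ collapses each summand to $\mathbb{R}[\lambda]/\langle a_i,p\rangle=\mathbb{R}[\lambda]/\langle p\rangle$, giving $\mathbb{R}^n\big/\,p(\lambda)\mathbb{R}^n\cong\big(\mathbb{R}[\lambda]/\langle p(\lambda)\rangle\big)^{k}$, a $k$-dimensional vector space over the field $\mathbb{R}[\lambda]/\langle p(\lambda)\rangle$ (namely $\mathbb{R}$ or $\mathbb{C}$ according as $\deg p=1$ or $2$). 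Since the images of any generating set of $\mathbb{R}^n$ must span this quotient, every generating set has at least $k$ elements, whence $m\geq k$. The delicate points are verifying that $\langle a_i,p\rangle=\langle p\rangle$ and correctly handling the case of an irreducible quadratic $p$ coming from a complex root of $a_k$.

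Should the module bookkeeping prove cumbersome, an alternative route is the Popov--Belevitch--Hautus test: controllability forces $\mathrm{rank}\,[\lambda I-A\mid B]=n$ for every eigenvalue $\lambda$, so subadditivity of rank gives $\dim\ker(\lambda I-A)\leq\mathrm{rank}\,B\leq m$. Because each companion block is non-derogatory, an eigenvalue that is a root of the smallest invariant factor $a_k$ appears in all $k$ blocks and hence has geometric multiplicity exactly $k$, delivering the same bound $m\geq k$.
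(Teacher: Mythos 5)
Your proposal is correct, and it reaches the conclusion by a genuinely different route than the paper. Both arguments begin the same way: controllability is recast as the statement that the columns of $B$ generate $\mathbb{R}^n$ as an $\mathbb{R}[\lambda]$-module (via Cayley--Hamilton), and the invariant factor decomposition $\mathbb{R}^n\cong\bigoplus_{i=1}^k\mathbb{R}[\lambda]/\langle a_i(\lambda)\rangle$ identifies the $k$ companion blocks with $k$ cyclic summands. Where you diverge is in proving that fewer than $k$ elements cannot generate this module. The paper argues by contradiction: assuming $m<k$, it asserts that the generators $v_1,\dots,v_k\in\mathrm{Im}(B)$ of the cyclic summands must be linearly dependent, reduces ``without loss of generality'' to the case $v_i=\alpha v_j$, and then shows that a single vector $v$ cannot cyclically generate the $(n_i+n_j)$-dimensional summand $\mathbb{R}[\lambda]/\langle a_i\rangle\oplus\mathbb{R}[\lambda]/\langle a_j\rangle$ because $a_i(A)$ annihilates $(P_i\oplus P_j)v$ by the divisibility chain. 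Your argument instead reduces modulo a maximal ideal $\langle p(\lambda)\rangle$ with $p\mid a_k$, turning the module into the $k$-dimensional vector space $(\mathbb{R}[\lambda]/\langle p\rangle)^k$ over the field $\mathbb{R}[\lambda]/\langle p\rangle$, onto which any generating set must surject; this is the standard ``minimal number of generators equals the number of invariant factors'' argument, and your verification that $\langle a_i,p\rangle=\langle p\rangle$ (since $p\mid a_i$ for all $i$ by the divisibility chain) is exactly the point that makes it work. What each approach buys: your reduction is cleaner and sidesteps the paper's shakiest step --- linear dependence of $v_1,\dots,v_k$ does not in general mean two of them are proportional, so the paper's ``WLOG'' as literally stated needs repair (one should instead take a single summand's generator to be a combination of the others and run essentially your dimension count, or your quotient argument). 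The paper's version, on the other hand, stays entirely within the concrete projection/cyclic-subspace machinery it sets up for Corollary~\ref{cor:cyclic_structure} and Theorem~\ref{thm:canonical_single}, which is why the authors phrase it that way. Your PBH alternative (geometric multiplicity of a root of $a_k$ equals $k$ since each companion block is non-derogatory, and PBH over $\mathbb{C}$ gives $\dim\ker(\lambda I-A)\le\mathrm{rank}\,B\le m$) is also valid and is the most elementary of the three for a control audience.
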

\begin{proof}
We prove this proposition by contradiction and assume controllability of the system on $\mathbb{R}^n$ with $m<k$. 

By the definition of the invariant factor decomposition, the $k$ companion blocks in the rational canonical form of the system matrix $A$ corresponds to $k$ invariant factors $a_1(\l)$, $\dots$, $a_k(\l)$ of $A$, so that $\mathbb{R}^n$ can be decomposed into a direct sum of $k$ cyclic subspaces under the system dynamics as
$$\mathbb{R}^n=\mathbb{R}[\l]/\<a_1(\l)\>\oplus\cdots\oplus\mathbb{R}[\l]/\<a_k(\l)\>,$$
and each $\mathbb{R}[\l]/\<a_1(\l)\>$ is $A$-invariant. On the other hand, controllability of the system indicates that $\mathbb{R}^n$ is generated by ${\rm Im}(B)$ under the action of $A$ as  
\begin{align*}
\mathbb{R}^n&={\rm span}\{A^ib_j:i=0,\dots,{n-1},j=1,\dots,m\}\nonumber\\
&={\rm Im}(B)+{\rm Im}(AB)+\dots+{\rm Im}(A^{n-1}B)
\end{align*}
where $b_j$ denotes the $j^{\rm th}$ row of $B$ for each $j=1,\dots,m$ and ``$+$" denotes the sum of vector spaces, i.e., every element $w\in\mathbb{R}^n$ can be represented as $w=\sum_{i=0}^{n-1}w_i$ for some $w_i\in{\rm Im}(A^iB)$, $i=0,\dots,n-1$, and this also gives a cyclic structure of $\mathbb{R}^n$. Then, the uniqueness of the invariant factor decomposition yields $\mathbb{R}[\l]/\<a_i(\l)\>=P_iW_i$ for cyclic subspace $W_{i}={\rm span}\{v_{i},Av_{i},\dots, A^{n-1}v_{i}\}$ generated by $v_i\in{\rm Im}(B)$ and all $i=1,\dots,n$, where where $P_i:\mathbb{R}^n\rightarrow\mathbb{R}[\l]/\<a_i(\l)\>$ denotes the projection operator onto $\mathbb{R}[\l]/\<a_i(\l)\>$. The $A$-invariance of $\mathbb{R}[\l]/\<a_i(\l)\>$ further gives the commutativity $AP_i=P_iA$ so that
\begin{align}
\label{eq:cyclic_subspace}
P_iW_i&={\rm span}\{P_iv_{i},P_iAv_{i},\dots, P_iA^{n-1}v_{i}\}\nonumber\\
&={\rm span}\{v_{i},AP_iv_{i},\dots, A^{n-1}P_iv_{i}\}\nonumber\\
&={\rm span}\{v_{i},AP_iv_{i},\dots, A^{n_i-1}P_iv_{i}\},
\end{align}
where $n_i$ denotes the degree of $a_i(\l)$ and the last step follows from $P_iv_i\in\mathbb{R}[\l]/\<a_i(\l)\>$ and $a_i(A)v=0$ restricted to $\mathbb{R}[\l]/\<a_i(\l)\>$.

However, because $m<k$, the generators $v_1,\dots,v_k\in{\rm Im}(B)$ of the cyclic subspaces of $\mathbb{R}^n$ must be linearly dependent. Without loss of generality, we assume that $v_i=\alpha v_j=v$ for some $\alpha\in\mathbb{R}$ and $i<j$, then we have
\begin{align}
&\mathbb{R}[\l]/\<a_i(\l)\>\oplus\mathbb{R}[\l]/\<a_j(\l)\>=P_iW_i\oplus P_jW_j=(P_i\oplus P_j)\mathbb{R}^n\nonumber\\
&={\rm span}\{(P_i\oplus P_j)A^{\alpha}v_\beta:\alpha=0,\dots,n-1,\beta=1,\dots,k\}\nonumber\\
&={\rm span}\{A^{\alpha}(P_i\oplus P_j)v_\beta:\alpha=0,\dots,n-1,\beta=1,\dots,k\}\nonumber\\
&={\rm span}\{(P_i\oplus P_j) v,A(P_i\oplus P_j)v,\dots,A^{n-1}(P_i\oplus P_j)v\} \label{eq:controadiction}
\end{align}
Let $n_i$ and $n_j$ be the degree of the invariant factors $a_i(\l)$ and $a_j(\l)$, respectively, then the dimension of the space $\mathbb{R}[\l]/\<a_i(\l)\>\oplus\mathbb{R}[\l]/\<a_j(\l)\>$ is $n_i+n_j$. By the assumption $i<j$, we have $a_j(\l)\mid a_i(\l)$, which implies
\begin{align*}
&a_i(A)(P_i\oplus P_j) v=a_i(A)(P_iv+P_jv)\\
&=a_i(A)P_iv+\frac{a_i(\l)}{a_j(\l)}a_j(A)P_jv=0.
\end{align*}
Therefore, the dimension of ${\rm span}\{(P_i\oplus P_j) v,A(P_i\oplus P_j)v,\dots,A^{n-1}(P_i\oplus P_j)v\}$ is at most $n_i$, which contradicts the equality in \eqref{eq:controadiction}.
\end{proof}

The condition $m\geq k$ in Proposition \ref{prop:number_of_inputs} can also be interpreted in the language of separating points as the shared spectra of the system matrix distributed in different companion blocks in its rational canonical form requires different control inputs to separate. However, technically, the proof of Proposition \ref{prop:number_of_inputs}, especially the contradiction of the equality in \eqref{eq:controadiction}, also indicates that this condition is not sufficient, since controllability also requires the control matrix to offer linear independent generators to all the cyclic subspaces in the invariant factor decomposition of the state-space induced by the system dynamics, which correspond to the companion blocks in the rational canonical form of the system matrix in the one-to-one fashion. We now summarize this observation as a corollary of Proposition \ref{prop:number_of_inputs}.



\begin{corollary}
\label{cor:cyclic_structure}
Let $a_1(\l)$, $\dots$, $a_k(\l)$ be the invariant factors of the system matrix $A$ of the linear system $\frac{d}{dt}x(t)=Ax(t)+Bu(t)$ defined on $\mathbb{R}^n$. Then, the system is controllable on $\mathbb{R}^n$ if and only if ${\rm Im}(B)$ has $k$ basis elements $v_1$, $\dots$, $v_k$ such that 
$$\mathbb{R}[\l]/\<a_i(\l)\>={\rm span}\{P_iv_i,AP_iv_i,\dots,A^{n_i-1}P_ib_i\}$$
with $P_i:\mathbb{R}^n\rightarrow\mathbb{R}[\l]/\<a_i(\l)\>$ and $n_i$ denoting the projection operators onto $\mathbb{R}[\l]/\<a_i(\l)\>$ and the degree of $a_i(\l)$, respectively, for all $i=1,\dots,k$.
\end{corollary}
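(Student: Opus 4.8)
The plan is to reduce classical controllability to the module-theoretic statement that ${\rm Im}(B)$ generates $\mathbb{R}^n$ under the action of $A$, and then to read off the desired spanning conditions componentwise from the invariant factor decomposition \eqref{eq:invariant_factor_decompostion}. Recall from the proof of Proposition \ref{prop:number_of_inputs} that each summand $\mathbb{R}[\lambda]/\langle a_i(\lambda)\rangle$ is $A$-invariant and that the associated projection $P_i:\mathbb{R}^n\rightarrow\mathbb{R}[\lambda]/\langle a_i(\lambda)\rangle$ commutes with $A$, i.e., $AP_i=P_iA$. As in Lemma \ref{lem:single_input}, controllability is equivalent to ${\rm Im}(W)=\mathbb{R}^n$ for the controllability matrix $W=[\,B\mid AB\mid\cdots\mid A^{n-1}B\,]$, which is precisely the condition that ${\rm Im}(B)$ generate $\mathbb{R}^n$ as an $\mathbb{R}[\lambda]$-module. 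Both implications will then be obtained by pushing this generation condition through the projections $P_i$.

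For necessity I would start from controllability and project. Since ${\rm Im}(B)$ generates $\mathbb{R}^n$ and $P_i$ commutes with $A$, applying $P_i$ shows that $P_i({\rm Im}(B))$ generates the cyclic module $\mathbb{R}[\lambda]/\langle a_i(\lambda)\rangle$ under $A$; cyclicity then yields a single $v_i\in{\rm Im}(B)$ whose projection $P_iv_i$ generates the $i$-th summand, giving the spanning identity ${\rm span}\{P_iv_i,AP_iv_i,\dots,A^{n_i-1}P_iv_i\}=\mathbb{R}[\lambda]/\langle a_i(\lambda)\rangle$ exactly as in \eqref{eq:cyclic_subspace}. It remains to arrange that $v_1,\dots,v_k$ are linearly independent, and here I would reuse the contradiction in the proof of Proposition \ref{prop:number_of_inputs}: if two generators coincided up to a scalar, then by the divisibility $a_{i+1}(\lambda)\mid a_i(\lambda)$ the single vector would annihilate $\mathbb{R}[\lambda]/\langle a_i(\lambda)\rangle\oplus\mathbb{R}[\lambda]/\langle a_j(\lambda)\rangle$ prematurely, collapsing the dimension below $n_i+n_j$ as displayed in \eqref{eq:controadiction}.

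For sufficiency I would assume the $v_i$ with the stated spanning properties and show that the family $\{A^jv_i:0\le j\le n_i-1,\ 1\le i\le k\}$, consisting of exactly $n=\sum_{i=1}^k n_i$ vectors, is a basis of $\mathbb{R}^n$, which forces ${\rm Im}(W)=\mathbb{R}^n$. The natural route is to prove linear independence: suppose $\sum_i p_i(A)v_i=0$ with $\deg p_i<n_i$, apply each projection $P_l$, and use the componentwise spanning identities together with the divisibility of the invariant factors to force every $p_i=0$.

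The hard part will be precisely this last step. Because each generator $v_i$ has nonzero components in several summands $\mathbb{R}[\lambda]/\langle a_l(\lambda)\rangle$, the block controllability matrix built from the $A^jv_i$ is \emph{not} block diagonal with respect to \eqref{eq:invariant_factor_decompostion}, so the componentwise spanning conditions alone do not decouple the relation $\sum_i p_i(A)v_i=0$. Controlling this cross-component coupling is the crux: I would process the summands along the divisibility chain $a_k(\lambda)\mid\cdots\mid a_1(\lambda)$, exploiting that $a_i(A)$ annihilates the $i$-th summand to peel the relation off one block at a time, and I expect the linear independence of $v_1,\dots,v_k$ secured in the necessity step to be exactly what excludes the degenerate configurations in which distinct generators supply the same cyclic direction.
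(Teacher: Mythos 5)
Your necessity argument is essentially the paper's (the paper simply points back to the contradiction in the proof of Proposition \ref{prop:number_of_inputs}), but your sufficiency argument takes a genuinely different route, and that is where the gap lies. The paper never tries to show that the $n$ vectors $A^jv_i$ are linearly independent; it argues by containment instead: from $\mathbb{R}[\lambda]/\langle a_i(\lambda)\rangle=P_iW_i\subseteq W_i$ it concludes $\mathbb{R}^n=P_1W_1\oplus\cdots\oplus P_kW_k\subseteq W_1+\cdots+W_k={\rm Im}(W)$, so the controllability matrix has full rank with no dimension count at all. You instead reduce sufficiency to proving that $\{A^jv_i:0\le j\le n_i-1,\ 1\le i\le k\}$ is a basis, correctly identify the cross-component coupling as ``the hard part,'' and then stop at a plan (``I would process the summands along the divisibility chain\dots I expect\dots''). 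That unexecuted step is the entire content of the sufficiency direction, so the proposal does not prove it.

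Worse, the intermediate claim you are aiming for does not follow from the stated hypotheses alone. Take $A={\rm diag}(1,2,1)$ on $\mathbb{R}^3$, with invariant factors $a_1(\lambda)=(\lambda-1)(\lambda-2)$ and $a_2(\lambda)=\lambda-1$, realized as $\mathbb{R}[\lambda]/\langle a_1(\lambda)\rangle={\rm span}\{e_1,e_2\}$ and $\mathbb{R}[\lambda]/\langle a_2(\lambda)\rangle={\rm span}\{e_3\}$, and let $B=[\,v_1\mid v_2\,]$ with $v_1=(1,1,1)^{\rm T}$ and $v_2=(1,0,1)^{\rm T}$. Then $v_1,v_2$ are a basis of ${\rm Im}(B)$, $P_1v_1$ generates the first summand and $P_2v_2$ the second, yet $v_1$, $Av_1$, $v_2$ all lie in the plane $\{(a,b,a)^{\rm T}\}$, so they are dependent and the system is in fact uncontrollable. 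Hence no amount of ``peeling along the divisibility chain'' can derive linear independence of the iterates from the componentwise spanning conditions plus linear independence of the $v_i$: the statement only holds when the realization of the invariant-factor decomposition is the one adapted to the cyclic subspaces $W_i$ generated by the $v_i$ themselves, which is exactly what the paper smuggles in through the identification $\mathbb{R}[\lambda]/\langle a_i(\lambda)\rangle=P_iW_i$ with $P_iW_i\subseteq W_i$. Your proof would need to construct that adapted decomposition (or otherwise import this compatibility) before the final step; as written, the key step is both missing and not fillable in the form you propose.
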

\begin{proof}
The necessity has been shown in the proof of Proposition \ref{prop:number_of_inputs} by contradiction, meaning, if ${\rm Im}(B)$ does not contain such basis elements, then the system cannot be controllable. Therefore, we focus on the sufficiency. 

Let $W_i={\rm span}\{v_i,Av_i,\dots,A^{n-1}v_i\}$ denote the cyclic subspace of $\mathbb{R}^n$ generated by $v_i$ under the system dynamics, then we have $\mathbb{R}[\l]/\<a_i(\l)\>=P_iW_i\subseteq W_i$, as shown in the proof of Proposition  \ref{prop:number_of_inputs}. On the other hand, the linear independence of $v_1,\dots,v_k$ implies 
\begin{align}
\label{eq:invariant_factor_system}
P_1W_1\oplus\cdots\oplus P_kW_k\subseteq W_1+\cdots+W_k.
\end{align}
Note that the left hand side of \eqref{eq:invariant_factor_system} gives the invariant factor decomposition of $\mathbb{R}^n$ under the the action of $A$, and hence $W_1+\cdots+W_k=\mathbb{R}^n$ holds as well. In addition, it is not hard to see $W_1+\cdots+W_k={\rm Im}(W)$, the range space of the controllability matrix $W=\big[\ B\mid AB\mid\cdots\mid A^{n-1}B\big]$ of the system, which then yields ${\rm rank}(W)=n$ indicating controllability of the system. 
\end{proof}

The main contribution of Corollary \ref{cor:cyclic_structure} is that it provides a necessary and sufficient controllability condition for multi-input linear systems. Meanwhile, it also generalizes Proposition \ref{prop:number_of_inputs} by conveying the intuitive idea: in addition to the number of control inputs, the way in which these inputs involve in the system, i.e., the structure of the control matrix, also plays a crucial role to guarantee controllability of the system.

Due to various favorable properties of the rational canonical form of a matrix, e.g., algebraically characterizing the shared spectra and the invariant subspaces of the action of the matrix, it is preferable to represent the dynamics of linear systems, i.e., the system matrices, in the rational canonical form. From the perspective of linear algebra, this amounts to the search of an appropriate basis for the state-space of the system that is consistent with its invariant factor decomposition under the system dynamics. To this end, we notice that the condition $\mathbb{R}[\l]/\<a_i(\l)\>={\rm span}\{P_iv_i,AP_iv_i,\dots,A^{n_i-1}P_ib_i\}$ with $v_i\in{\rm Im}(B)$ in Corollary \ref{cor:cyclic_structure} gives a strong hint of constructing such a basis by using the control matrix $B$.

\begin{theorem}[Classical controllability canonical form]
\label{thm:canonical_single}
Let $\frac{d}{dt}x(t)=Ax(t)+Bu(t)$ with $A\in\mathbb{R}^{n\times n}$ and $B\in\mathbb{R}^{n\times m}$ be a controllable system on $\mathbb{R}^n$. Then, there exists a basis for $\mathbb{R}^n$ such that the representation of the system in the coordinates with respect to this basis has the form
\begin{align}
\label{eq:canonical_single}
\frac{d}{dt}y(t)=\tilde Ay(t)+\tilde Bu(t),
\end{align}
where  
\begin{align*}
C=\left[\begin{array}{ccc} C_1 & & \\ & \ddots & \\ & & C_k \end{array}\right]
\end{align*}
 is the rational canonical form of $A$, and 
 \begin{align*}
\widetilde B=\left[\begin{array}{ccccc} \tilde b_{11} & \cdots & \tilde b_{1k} & \cdots & \tilde b_{1l} \\  & \ddots & \vdots & & \vdots \\  & & \tilde b_{kk} & \cdots & \tilde b_{kl} \end{array}\right]
\end{align*}
is a block upper triangular matrix. Equivalently, there is $P\in{\rm GL}(n,\mathbb{R})$ such that $y(t)=Px(t)$, $C=P^{-1}AP$ and $\widetilde B=P^{-1}B$, where ${\rm GL}(n,\mathbb{R})$ is the group of invertible $n$-by-$n$ real matrices. 
\end{theorem}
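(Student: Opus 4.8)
The plan is to construct the change-of-basis matrix $P$ explicitly from cyclic chains generated by vectors lying in $\mathrm{Im}(B)$, and then to verify separately that $P$ carries $A$ into its rational canonical form and $B$ into block upper triangular form. The starting point is Corollary \ref{cor:cyclic_structure}: since the system is controllable, $\mathrm{Im}(B)$ contains linearly independent vectors $v_1,\dots,v_k$ such that, for each $i$, the projected chain $P_iv_i, AP_iv_i,\dots,A^{n_i-1}P_iv_i$ is a basis of the cyclic summand $\mathbb{R}[\lambda]/\langle a_i(\lambda)\rangle$ in the invariant factor decomposition of $\mathbb{R}^n$. Because $\sum_{i=1}^k n_i=n$ and the summands form a direct sum, collecting these chains in the order $i=1,\dots,k$ yields a basis of $\mathbb{R}^n$; I would take $P$ to be the matrix whose columns are exactly these basis vectors.

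First I would verify that $C=P^{-1}AP$ is the rational canonical form. Each summand $\mathbb{R}[\lambda]/\langle a_i(\lambda)\rangle$ is $A$-invariant, and on the cyclic basis $P_iv_i,\dots,A^{n_i-1}P_iv_i$ the action of $A$ sends each basis vector to the next while $a_i(A)P_iv_i=0$ re-expresses $A^{n_i}P_iv_i$ in terms of the chain; this is precisely the computation carried out in Section \ref{sec:rational}, so the matrix of $A$ restricted to the $i$-th summand is the companion matrix $C_i$ of $a_i(\lambda)$. Direct-sum invariance then forces $C=\mathrm{diag}(C_1,\dots,C_k)$, which is the rational canonical form of $A$ by construction.

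The crux is to establish that $\widetilde B=P^{-1}B$ is block upper triangular, and this is where the divisibility condition $a_{i+1}(\lambda)\mid a_i(\lambda)$ must be exploited. The block $\widetilde b_{ij}$ records the coordinates, in the $i$-th companion basis, of the component of the $j$-th column of $B$ lying in $\mathbb{R}[\lambda]/\langle a_i(\lambda)\rangle$, so block upper triangularity amounts to the vanishing of $\widetilde b_{ij}$ for $i>j$ in the first $k$ columns. The plan is to choose the generators greedily, so that the $j$-th generator is selected from $\mathrm{Im}(B)\cap\big(\mathbb{R}[\lambda]/\langle a_1\rangle\oplus\cdots\oplus\mathbb{R}[\lambda]/\langle a_j\rangle\big)$ with its projection onto the $j$-th summand still a cyclic generator, and to place these generators as the first $k$ columns. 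The nested divisibility of the invariant factors is what should guarantee that such a selection exists while preserving the cyclic-generation property supplied by Corollary \ref{cor:cyclic_structure}; with the generators so placed, every block below the diagonal vanishes, and the remaining $m-k$ columns (which exist since $m\geq k$ by Proposition \ref{prop:number_of_inputs}) occupy the full last block-columns, yielding the stated shape of $\widetilde B$.

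I expect the triangularity argument of the previous paragraph to be the main obstacle. The difficulty is that the generators are constrained to lie in the fixed subspace $\mathrm{Im}(B)$, so one cannot freely subtract off components in later summands; the vanishing of the sub-diagonal blocks must instead be extracted from the interaction between controllability (which guarantees enough independent generators) and the divisibility chain $a_k\mid\cdots\mid a_1$ (which controls how repeated spectra are distributed across the companion blocks, as discussed at the end of Section \ref{sec:rational}). Verifying that a single basis $P$ simultaneously realizes both the companion-block form of $A$ and the triangular form of $B$ is the delicate point, whereas the identification of $C$ as the rational canonical form is routine.
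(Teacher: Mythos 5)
Your overall strategy---build $P$ from cyclic chains tied to ${\rm Im}(B)$ via the invariant factor decomposition, read off the companion blocks, then argue triangularity of $\widetilde B$---is the same as the paper's, and your treatment of the rational canonical form of $A$ is fine. The gap is exactly where you flag it: the block upper triangularity of $\widetilde B$. Your plan is to select the $j$-th generator from ${\rm Im}(B)\cap\big(\mathbb{R}[\l]/\<a_1\>\oplus\cdots\oplus\mathbb{R}[\l]/\<a_j\>\big)$ with its $j$-th projection still cyclic, and you only assert that divisibility ``should guarantee'' this exists. It does not, if the summands are fixed in advance as in Corollary \ref{cor:cyclic_structure}. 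Concretely, take $A={\rm diag}(1,2,1)$ with the decomposition $V_1={\rm span}\{e_1,e_2\}$ (invariant factor $(\l-1)(\l-2)$) and $V_2={\rm span}\{e_3\}$ (invariant factor $\l-1$), and ${\rm Im}(B)={\rm span}\{e_1,\,e_2+e_3\}$. The system is controllable, but ${\rm Im}(B)\cap V_1={\rm span}\{e_1\}$, and $e_1$ is an eigenvector, hence not a cyclic generator of the two-dimensional $V_1$. So your greedy selection fails at the very first step.

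The paper's proof avoids this by \emph{not} requiring the generators to lie in ${\rm Im}(B)$, and by building the summands adaptively rather than taking them as given. One picks $w_1\in{\rm Im}(B)$ with $a_2(A)w_1\neq0$ and \emph{defines} the first summand as the cyclic space generated by $v_1=w_1$; then one picks $w_i\in{\rm Im}(B)$ and sets $v_i=(I-P_{i-1})\cdots(I-P_1)w_i$, i.e., the generator of the $i$-th summand is the component of a column of $B$ complementary to the previously constructed summands. With this choice, $w_i=v_i+(\text{terms in summands }1,\dots,i-1)$ holds by construction, so each column of $B$ automatically has zero coordinates in the blocks below the $i$-th---the triangularity is built in rather than extracted from divisibility afterwards. (In the example above, this forces the first summand to be ${\rm span}\{e_2,e_3\}$ rather than ${\rm span}\{e_1,e_2\}$.) To repair your argument you would need to either adopt this adaptive construction, or prove that among all valid invariant factor decompositions there exists one compatible with your intersection condition---which amounts to the same thing.
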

\begin{proof}
Let $a_1(\l)$, $\dots$, $a_k(\l)$ be the invariant factors of $A$ of the degree $n_1$, $\dots$, $n_k$, respectively, and $\mathbb{R}^n=\mathbb{R}[\l]/\<a_1(\l)\>\oplus\cdots\oplus\mathbb{R}[\l]/\<a_k(\l)\>$ be the invariant factor decomposition of $\mathbb{R}^n$ under the action of $A$. The proof then follows from inductively constructing a basis of $\mathbb{R}^n$, consisting of the bases of the $A$-invariant subspaces $\mathbb{R}[\l]/\<a_i(\l)\>$, by using the column vectors $b_1$, $\dots$, $b_m$ of $B$. 

We first assume $n_1>\cdots>n_k$, and start from constructing a basis for $\mathbb{R}[\l]/\<a_1(\l)\>$. To this end, we pick $v_1=w_1\in{\rm Im}(B)$ such that $a_2(A)v_1\neq0$. The existence of $v_1$ can be shown by contradiction: otherwise, no element in ${\rm Im}(B)$ can generate an $n_1$-dimensional subspace of $\mathbb{R}^n$ under the action of $A$, and hence $\mathbb{R}[\l]/\<a_1(\l)\>$, which contradicts controllability of the system by Corollary \ref{cor:cyclic_structure}. Moreover, because $a_i(\l)\mid a_2(\l)$ for all $i=3,\dots,k$, $a_2(A)v_1\neq0$ guarantees $a_i(A)v_1\neq0$ for all $i=3,\dots,k$. Consequently, the cyclic subspace of $\mathbb{R}^n$ generated by $v_1$ under the action of $A$ must be $n_1$-dimensional. Together with $a_1(A)v_1=0$ guaranteeing the $A$-invariance of this subspace ${\rm span}\{v_1,Av_1\dots,A^{n_1-1}v_1\}$, it necessarily coincides with $\mathbb{R}[\l]/\<a_1(\l)\>$. Next, a similarly procedure can be employed to construct a basis for $\mathbb{R}[\l]/\<a_2(\l)\>$. In particular, we pick $w_2\in{\rm Im}\big((I-P_1)B\big)$ such that $v_2=(I-P_1)w_2$, i.e., $v_2$ is in the complement of $\mathbb{R}[\l]/\<a_1(\l)\>$, satisfies $a_2(A)v_2=0$ but $a_3(A)v_2\neq0$, and hence we obtain $\mathbb{R}[\l]/\<a_2(\l)\>={\rm span}\{v_2,Av_2\dots,A^{n_2-1}v_2\}$. Inductively, we can find $w_i\in{\rm Im}(B)$ such that $v_i=(I-P_{i-1})\cdots(I-P_1)w_i$ satisfies $\mathbb{R}[\l]/\<a_i(\l)\>={\rm span}\{v_i,Av_i\dots,A^{n_i-1}v_i\}$ for all $i=1,\dots,k$.

However, if $n_i=n_{i+1}$ for some $i=1,\dots,k-1$, then at the $i^{\rm th}$ iteration of the above construction, we can find $w_i,w_{i+1}\in{\rm Im}(B)$ such that $v_i=(I-P_{i-1})\cdots(I-P_1)w_i$ and $v_{i+1}=(I-P_{i-1})\cdots(I-P_1)w_{i+1}$ are linearly independent and satisfy $\mathbb{R}[\l]/\<a_i(\l)\>={\rm span}\{v_i,Av_i\dots,A^{n_i-1}v_i\}$ and $\mathbb{R}[\l]/\<a_{i+1}(\l)\>={\rm span}\{v_{i+1},Av_{i+1}\dots,A^{n_{i+1}-1}v_i\}$.

Because $n=n_1+\dots+n_k$, the collection of the bases $v_1$, $\dots$, $A^{n_1-1}v_1$, $\dots$, $v_k$, $\dots$, $A^{n_k-1}v_k$ for all the cyclic subspaces of $\mathbb{R}^n$ invariant under the system dynamics forms a basis for the whole $\mathbb{R}^n$, under which the matrix representation $C$ of the linear transform $x\mapsto Ax$ is in the rational canonical form by the definition of the invariant factor decomposition and rational canonical form in Section \ref{sec:rational}. 

To see that $B$ admits an upper triangular matrix representation under this basis, we first note that because the invariant factor decomposition is a direct sum decomposition, $\mathbb{R}[\l]/\<a_i(\l)\>\cap\mathbb{R}[\l]/\<a_j(\l)\>=\{0\}$ holds for all $i\neq j$. As a result, there is a partition $\{I_1,\dots,I_{k+1}\}$ on the columns of $B$, namely, $I_i\cap I_j=\varnothing$ for all $i\neq j$ and $\bigcup_{i=1}^{k+1}I_i=\{1,\dots,m\}$, such that $w_i={\rm span}\{b_j:j\in I_i\}$, i.e., the generators of different $A$-invariant cyclic subspaces of $\mathbb{R}^n$ must be generated by different column vectors of $B$. Without loss of generality, we can order the column vectors of $B$ in the way that $\max I_i<\min I_j$ for $i<j$, i.e., every element in $I_i$ is less than that in $I_j$ if $i<j$. Then, the construction of the basis as $v_i\in{\rm Im}\big((I-P_{i-1})\cdots(I-P_1)B\big)$ for all $i=1,\dots,k$ guarantees that the columns $b_j$ of $B$ with $j\in I_1\cup\cdots\cup I_{i-1}$ are complementary to all the $v_l$ with $l\geq i$, which then gives the upper triangular matrix representation $\widetilde B$ of $B$ under this basis. 

Equivalently, in terms of matrices, applying the matrix $P=[\ v_1\mid\cdots\mid A^{n_1-1}v_1\mid\cdots\mid v_k\mid\cdots\mid A^{n_k-1}v_k\ ]\in{\rm GL}(n,\mathbb{R})$, whose column vectors are the basis of $\mathbb{R}^n$ constructed above, as a coordinate transformation to the system as $y(t)=Px(t)$ yields the representation of the system and control matrices as $C=P^{-1}AP$ and $\widetilde B=P^{-1}B$, respectively, in the $y$-coordinate as desired.
\end{proof}

The classical controllability canonical form in \eqref{eq:canonical_single} estabilished for multi-input controllable linear systems is equivalently to the one suggested in \cite{Wonham85} up to a change of coordinates.

\section{Ensemble Controllability Canonical Form for Separating Points}
\label{sec:canoncial_ensemble}

The pivotal role of the separating point technique in ensemble control theory has been witnessed in Proposition \ref{prop:separating_pts}, which gives rise to the equivalence between ensemble and classical controllability. In the previous section, we further strengthen the power of this technique by showing its capability to characterize classical controllability, which leads to the establishment of a classical controllability canonical form for multiple input linear systems. This section then devotes to unify these two notions of the separating points, and particularly, to establish a controllability canonical form for linear ensemble systems by leveraging that for classical linear systems. 

The major challenge to unifying the ideas of separating points for classical and ensemble linear systems is to coordinate different representations of shared spectra, those are, repeated eigenvalues (point spectra) for constant matrices and overlapping images of eigenvalue functions (continuous spectra) for matrix-valued functions, respectively. To overcome this, the essential tool is the reparameterization technique used in Proposition \ref{prop:separating_pts}, that is, the reparameterization of linear ensemble systems by the eigenvalue functions of their system matrices, which converts overlapping images of the eigenvalue functions to repeated eigenvalues. This in turn provides the rational canonical form a chance to characterize shared spectra of matrix-valued functions, and hence introduce it to the study of uniform ensemble controllability of linear ensemble systems. 

\subsection{Canonical Form for Single-Input Linear Ensemble Systems}

To illuminate the main idea, we start from integrating the techniques of separating overlapping images of eigenvalue functions and repeated eigenvalues for single-input linear ensemble systems.

\begin{lemma}
\label{lem:single_input_ensemble}
Given a non-Sobolev type time-invariant single-input linear ensemble system in the form of
\begin{align}
\label{eq:single_input_ensemble}
\frac{d}{dt}x(t,\beta)=A(\beta)x(t,\beta)+b(\beta)u(t),
\end{align} 
where $\b$ takes values on a compact subset $K$ of $\mathbb{R}$, and $A\in C(K,\mathbb{R}^{n\times n})$ has real-valued eigenvalue functions $\l_1,\dots,\l_n\in C(K,\mathbb{R})$, and $b\in C(K,\mathbb{R}^{n})$. If the system is uniformly ensemble controllable on $C(K,\mathbb{R}^n)$, then 
\begin{enumerate}
\item $\l_i$ is injective for every $i=1,\dots,n$,
\item $\l_i(K)\cap\l_j(K)=\varnothing$ for any $i\neq j$, where $\l_i(K)=\{\l_i(\b)\in\mathbb{R}:\beta\in K\}$ and $\l_j(K)=\{\l_j(\b)\in\mathbb{R}:\beta\in K\}$ denote the images of the functions $\l_i$ and $\l_j$, respectively.
\end{enumerate}
\end{lemma}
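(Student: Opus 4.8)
The plan is to convert the infinite-dimensional controllability hypothesis into a family of finite-dimensional controllability conditions and then exploit the scalar nature of the input. First, since \eqref{eq:single_input_ensemble} is non-Sobolev type, Remark \ref{rmk:Sobolev} lets me replace it by its diagonalizable counterpart with system matrix $\Lambda(\b)=\mathrm{diag}(\l_1(\b),\dots,\l_n(\b))$ and single-column control $\tilde b=P^{-1}b\in C(K,\mathbb{R}^n)$, preserving uniform ensemble controllability. Applying Proposition \ref{prop:separating_pts} to this diagonalized system, uniform ensemble controllability is equivalent to controllability on $\mathbb{R}^N$ of the reparameterized system \eqref{eq:reparameterization} for \emph{every} tuple $(\eta_1,\dots,\eta_n)\in K_1\times\cdots\times K_n$. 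The feature I would lean on is that, because $m=1$, each Ensemble Controllability Criterion Matrix $D_i(\eta_i)$ is a single column whose entries are the values $\tilde b_i(\b^l_{\eta_i})$ of the $i$-th control coordinate at the preimages of $\eta_i$.

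The workhorse is the elementary fact that a single-input system $\dot z=\mu I_r\,z+d\,u$ with scalar spectrum $\mu I_r$, $r\geq 2$, is never controllable on $\mathbb{R}^r$: its controllability matrix $[\,d\mid\mu d\mid\mu^2 d\mid\cdots\,]$ consists entirely of scalar multiples of the single column $d$, so it has rank at most $1<r$; equivalently, by the Popov--Belevitch--Hautus test, a diagonalizable single-input system is controllable only if every eigenvalue is simple. Both conclusions then reduce to this one requirement. For \textbf{(1)}, I argue by contradiction: if some $\l_i$ fails to be injective, then $\kappa_i(\eta_i^\ast)\geq 2$ for some $\eta_i^\ast\in K_i$, so in the tuple carrying this value of $\eta_i$ the $i$-th block of \eqref{eq:reparameterization} is exactly such a scalar-spectrum subsystem of size $\geq 2$. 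Because \eqref{eq:reparameterization} is block diagonal, uncontrollability of this block propagates to the whole system (most transparently through the Hautus test at $\eta_i^\ast$, where the relevant entries of the single control column cannot be linearly independent), contradicting the controllability guaranteed for every tuple. Hence each $\l_i$ is injective.

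For \textbf{(2)}, I again argue by contradiction. Suppose $\eta^\ast\in\l_i(K)\cap\l_j(K)$ with $i\neq j$; having established (1), every $\kappa_l\equiv 1$, so each block is one-dimensional. I then choose the admissible tuple with $\eta_i=\eta_j=\eta^\ast$ and the remaining coordinates arbitrary in their $K_l$, which makes the $i$-th and $j$-th blocks a two-dimensional subsystem with system matrix $\eta^\ast I_2$ driven by the single control column $[\,\tilde b_i(\l_i^{-1}(\eta^\ast)),\ \tilde b_j(\l_j^{-1}(\eta^\ast))\,]^{\top}$. By the workhorse fact this subsystem is uncontrollable, and the same block-diagonal propagation contradicts Proposition \ref{prop:separating_pts}; therefore the images are pairwise disjoint.

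The step I expect to carry the real weight is the reduction rather than the rank bookkeeping: invoking Remark \ref{rmk:Sobolev} to pass to the diagonalizable counterpart and then Proposition \ref{prop:separating_pts} to turn an infinite-dimensional approximate-controllability statement into finite-dimensional exact controllability over the \emph{entire} product $K_1\times\cdots\times K_n$ of spectra. Once that bridge is secured, both conclusions collapse to a single principle---a scalar input cannot separate two modes sharing an eigenvalue---with (1) accounting for two modes produced by one non-injective $\l_i$ and (2) for two modes produced by distinct eigenvalue functions with overlapping ranges. The only mild care needed is verifying that the offending tuples are genuinely admissible, i.e. lie in $K_1\times\cdots\times K_n$, which holds precisely because $\eta_i^\ast\in K_i$ in (1) and $\eta^\ast\in K_i\cap K_j$ in (2).
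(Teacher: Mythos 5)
Your proposal is correct and follows essentially the same route as the paper: pass to the diagonalizable counterpart via Remark \ref{rmk:Sobolev}, reparameterize by the eigenvalue functions, invoke Proposition \ref{prop:separating_pts} to reduce to classical controllability of every individual reparameterized system, and then observe that a single input cannot control a diagonal block with a repeated eigenvalue. The only cosmetic difference is that the paper cites Proposition \ref{prop:number_of_inputs} (one companion block in the rational canonical form) where you use the direct rank/PBH argument, which is the same obstruction stated more elementarily.
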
 
\begin{proof}
Because all the eigenvalue functions of $A$ are real-valued, it is possible to transform the system in \eqref{eq:single_input_ensemble} to the form
\begin{align}
\label{eq:single_input_ensemble_triangular}
\frac{d}{dt}y(t,\beta)=T(\beta)y(t,\beta)+\tilde b(\beta)u(t)
\end{align} 
such that $T\in C(K,\mathbb{R}^{n\times n})$ an upper triangular matrix whose diagonal entries are the eigenvalue functions $\l_k$, $k=1,\dots,n$ of $A$. Next, to apply the technique of separating points as shown in Proposition \ref{prop:separating_pts}, we reparameterize the diagonalized counterpart of the system in \eqref{eq:single_input_ensemble_triangular}, that is,
\begin{align}
\label{eq:single_input_ensemble_diag}
\frac{d}{dt}y(t,\beta)=\Lambda(\beta)y(t,\beta)+\tilde b(\beta)u(t)
\end{align}
with $\Lambda={\rm diag}(\l_1,\dots, \l_n )\in C(K,\mathbb{R}^{n\times n})$, which has the same uniform ensemble controllability property as the system in \eqref{eq:single_input_ensemble} according to Remark \ref{rmk:Sobolev}, by the eigenvalue functions $\eta_k=\l_k(\b)$, $k=1,\dots,n$, and this procedure results in a linear ensemble system parameterized by $\eta=(\eta_1,\dots,\eta_n)\in\l_1(K)\times\cdots\times\l_n(K)$ as
\begin{align}
\label{eq:single_input_ensemble_reparam}
\frac{d}{dt}\left[\begin{array}{c} z_1(t,\eta_1)  \\  \vdots \\ z_n(t,\eta_n) \end{array}\right]=\left[\begin{array}{c} \eta_1z_1(t,\eta_1)  \\  \vdots \\  \eta_nz_n(t,\eta_n) \end{array}\right]+\left[\begin{array}{c} D_1(\eta_1)  \\  \vdots \\ D_n(\eta_n) \end{array}\right]u(t),
\end{align} 
where $D(\eta_i)$ is the Ensemble Controllability Criterion Matrix of the $i^{\rm th}$ state $\frac{d}{dt}y_i(t,\b)=\l_i(\b)y_i(t)+\tilde b_i(\b)u(t)$ of the system in \eqref{eq:single_input_ensemble_diag}  with $\tilde b_i\in C(K,\mathbb{R})$ denoting the $i^{\rm th}$ row of $\tilde b\in C(K,\mathbb{R}^n)$. 

According to Proposition \ref{prop:separating_pts}, uniform ensemble controllability of the system in \eqref{eq:single_input_ensemble} implies classical controllability of each individual system in the ensemble in \eqref{eq:single_input_ensemble_reparam}. Because the system in \eqref{eq:single_input_ensemble_reparam} is single-input, by Proposition \ref{prop:number_of_inputs}, classical controllability requires that the rational canonical form of its system matrix contains only one companion block for each $\eta\in\l_1(K)\times\cdots\times\l_n(K)$. Together with the diagonal form of the system matrix, it cannot have any repeated eigenvalue for each $\eta$. Therefore, $D(\eta_i)\in\mathbb{R}$ and $\eta_i\neq\eta_j$ for all $i\neq j$ must hold for every $\eta$, which, in terms of the $\beta$ parameterization, are exactly the injectivity and disjoint images of all the eigenvalue functions $\l_i$ of $A$, respectively.
\end{proof}

The proof of Lemma \ref{lem:single_input_ensemble} also gives a glimpse at the utilization of the rational canonical form in the study of ensemble controllability. To enhance the role of the rational canonical form in this research direction, we follow the guidance of the intuition that ensemble controllability  of an ensemble system implies classical controllability of each individual system in this ensemble, which can also been seen as follows: because the individual systems in the ensemble in \eqref{eq:single_input_ensemble} are also included in the reparameterized ensemble in \eqref{eq:single_input_ensemble_reparam}, classical controllability of the individuals in \eqref{eq:single_input_ensemble_reparam}, guaranteeing uniform ensemble controllability of the system in \eqref{eq:single_input_ensemble}, indicates classical controllability of the individuals in \eqref{eq:single_input_ensemble}. Then, each individual system, as a controllable single-input classical linear system, in the ensemble in \eqref{eq:single_input_ensemble}, can be transformed to the classical controllability canonical form, in which the system matrix is a companion matrix and the control matrix is the first standard basis vector, i.e., the first entry is 1 and 0 elsewhere, as discussed in Section \ref{sec:rational}. This immediately opens up the possibility of globally transforming a single-input uniformly ensemble controllable linear system to the form in which the system matrix is a ``companion matrix-valued function" and the control matrix is the constant function equal to the first standard basis vector.

\begin{theorem}[Canonical form for single-input systems]
\label{thm:single_input_ensemble}
Given a single-input uniformly ensemble controllable linear ensemble system defined on $C(K,\mathbb{R}^n)$ as in \eqref{eq:single_input_ensemble}, there exists $P\in C\big(K,{\rm GL}(n,\mathbb{R})\big)$ such that in the coordinate $y(t,\cdot)=P^{-1}x(t,\cdot)$ the system dynamics is governed by  
\begin{align}
\label{eq:single_input_ensemble_canonical}
\frac{d}{dt}y(t,\b)=C(\b)y(t,\b)+\bar{b}(\b)u(t),
\end{align}
where 
\begin{align*}
C(\b)&=P^{-1}(\b)A(\b)P(\b)\\
&=\left[\begin{array}{ccccc} 0 & 0 & \cdots & 0 & -c_0(\b) \\ 1 & 0 & \cdots & 0 & -c_1(\b) \\ 0 & 1 & \cdots & 0 & -c_2(\b) \\ \vdots & \vdots & \ddots & \vdots & \vdots \\ 0 & 0 & \cdots & 1 & -c_{n-1}(\b)\end{array}\right]\in C(K,\mathbb{R}^{n\times n})
\end{align*}
and
$$
\bar{b}(\b)=P^{-1}(\b)b(\b)=\left[\begin{array}{c} 1 \\ 0 \\ \vdots \\ 0 \\ 0 \end{array}\right]\in C(K,\mathbb{R}^n).
$$
\end{theorem}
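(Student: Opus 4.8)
The plan is to take $P(\b)$ to be the \emph{controllability matrix} of the individual system indexed by $\b$, namely
\begin{equation*}
P(\b)=\big[\,b(\b)\mid A(\b)b(\b)\mid\cdots\mid A(\b)^{n-1}b(\b)\,\big],
\end{equation*}
and to verify that this single, explicitly given $\b$-family of matrices simultaneously achieves all three assertions: it lies in $C\big(K,{\rm GL}(n,\mathbb{R})\big)$, it conjugates $A(\b)$ into the companion form $C(\b)$, and it sends $b(\b)$ to the first standard basis vector. The reason to prefer the controllability matrix over, say, a $\b$-dependent eigenbasis is precisely that it is assembled from continuous algebraic operations on the continuous data $A$ and $b$, so no delicate continuous selection of eigenvectors is needed; this is exactly the single-input construction reviewed in Section~\ref{sec:rational}, now carried out uniformly in $\b$.

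First I would establish that each individual pair $(A(\b),b(\b))$ is classically controllable on $\mathbb{R}^n$ for every $\b\in K$. This follows from the discussion preceding the statement together with Proposition~\ref{prop:separating_pts}: uniform ensemble controllability of \eqref{eq:single_input_ensemble} is equivalent to classical controllability of every individual in the reparameterized ensemble \eqref{eq:single_input_ensemble_reparam}, and since (by the injectivity and disjointness of the eigenvalue functions granted by Lemma~\ref{lem:single_input_ensemble}, so that each $\kappa_i(\eta_i)=1$) the original individuals are included among those reparameterized individuals, classical controllability of $(A(\b),b(\b))$ follows. Consequently $P(\b)$ has rank $n$, hence is invertible, for every $\b\in K$.

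Next I would dispatch the continuity claims. Continuity of $P$ is immediate, since each column $A(\b)^{j}b(\b)$ is continuous in $\b$ as a composition of the continuous maps $A$, $b$ and matrix multiplication. Because $P(\b)\in{\rm GL}(n,\mathbb{R})$ for every $\b$ and matrix inversion is continuous on ${\rm GL}(n,\mathbb{R})$ (for instance, by Cramer's rule the entries of $P^{-1}$ are rational functions of the entries of $P$ with nonvanishing denominator $\det P$), we get $P^{-1}\in C\big(K,{\rm GL}(n,\mathbb{R})\big)$ as well, so $y=P^{-1}x$ is a legitimate coordinate change on $C(K,\mathbb{R}^n)$.

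Finally I would verify the pointwise algebraic identities. For fixed $\b$, since $b(\b)=P(\b)e_1$ is the first column of $P(\b)$, we obtain $\bar b(\b)=P^{-1}(\b)b(\b)=e_1$, independent of $\b$. For the system matrix, writing $P(\b)e_{j}=A(\b)^{j-1}b(\b)$ gives $A(\b)P(\b)e_j=P(\b)e_{j+1}$ for $j<n$, while the Cayley--Hamilton theorem (with characteristic polynomial equal to the minimal polynomial in the cyclic single-generator case) expresses $A(\b)^n b(\b)$ as a combination of the lower powers with coefficients $-c_0(\b),\dots,-c_{n-1}(\b)$; together these identities yield $C(\b)=P^{-1}(\b)A(\b)P(\b)$ in the stated companion form. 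The coefficients $c_i(\b)$ are those of the characteristic polynomial of $A(\b)$, hence polynomial in the entries of $A(\b)$ and thus continuous in $\b$, giving $C\in C(K,\mathbb{R}^{n\times n})$. I expect the main point to watch is not any individual step but their coordination into a genuinely \emph{global} statement: one must ensure that the \emph{same} continuous $P$ realizes the classical canonical form at every $\b$ at once, and the virtue of the controllability-matrix choice is precisely that it globalizes the pointwise classical canonical form continuously, avoiding any extra continuous-selection argument.
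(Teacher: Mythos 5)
Your proposal is correct and follows essentially the same route as the paper's proof: take $P(\b)$ to be the controllability matrix of the individual system at $\b$, deduce its pointwise invertibility from classical controllability of each individual (itself a consequence of uniform ensemble controllability), and obtain continuity of $P$ and $P^{-1}$ from the polynomial dependence of the entries on $A$ and $b$. The only differences are cosmetic: you justify continuity of $P^{-1}$ via Cramer's rule where the paper invokes the inverse function theorem, and you spell out the companion-form and $\bar b=e_1$ identities that the paper leaves implicit.
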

\begin{proof}
Guaranteed by uniform ensemble controllability of the whole ensemble in \eqref{eq:single_input_ensemble}, all of its individual systems are controllable on $\mathbb{R}^n$. In particular, for the individual system indexed by $\b\in K$, its controllability matrix
$$P(\b)=\left[\begin{array}{cccc} b(\b) & A(\b)b(\b) & \cdots & A^{n-1}(\b)b(\b) \end{array}\right]\in{\rm GL}(n,\mathbb{R})$$
serves as a change of coordinates $y(t,\b)=P^{-1}(\b)x(t,\b)$ giving $C(\b)=P^{-1}(\b)A(\b)P(\b)$ and $\bar{b}(\b)=P^{-1}(\b)b(\b)$.

By now, we have shown the existence of a pointwise transformation of the ensemble system in \eqref{eq:single_input_ensemble} to the form in \eqref{eq:single_input_ensemble_canonical}. The remaining task is to prove the continuity of $P$ and $P^{-1}$, as functions defined on $K$, with respect to $\b\in K$, which results in $C\in C(K,\mathbb{R}^{n\times n})$ and $\bar{b}\in C(K,\mathbb{R}^n)$ so that the ensemble system in \eqref{eq:single_input_ensemble_canonical} indeed evolves on $C(K,\mathbb{R}^n)$. To this end, we note that every entry of $P$ is a polynomial, which is a continuous function, in the entries of $A$ and $b$, together with the continuity of $A$ and $b$, $P$ is also a continuous function as a composition of continuous functions. Furthermore, the continuity of $P^{-1}$ follows from the application of the inverse function theorem to $P^{-1}P=I$, the $n$-by-$n$ identity matrix. 
\end{proof}

It is well known that controllability of linear systems is invariant under change of coordinates, and hence, in Theorem \ref{thm:single_input_ensemble}, the assumption of uniform ensemble controllability of the system in \eqref{eq:single_input_ensemble} implies that of its canonical form in \eqref{eq:single_input_ensemble_canonical}. However, fundamentally different from classical linear systems, the feasibility of transforming a linear ensemble system to the canonical form does not guarantee its uniform ensemble controllability, as shown in the following example. 

\begin{example}
\label{ex:canonical_uncontrollable_2d}
We revisit the ensemble system in \eqref{eq:uncontrollable_2d} in Example \ref{ex:uncontrollable_2d} defined on $C([1,2],\mathbb{R}^2)$,
$$\frac{d}{dt}x(t,\b)=\b\left[\begin{array}{cc} 1 & 0 \\ 0 & 2 \end{array}\right]x(t,\b)+\left[\begin{array}{c} 1  \\ 1 \end{array}\right]u(t).$$
Although the system is not uniformly ensemble controllable on $C([1,2],\mathbb{R}^2)$, it is straightforward to check that the $\mathbb{R}^{2\times 2}$-valued function
$$P(\b)=\left[\begin{array}{cc} 1 & 1 \\ \b & 2\b\end{array}\right]\in C([1,2],\mathbb{R}^{2\times2})$$
transforms it to the canonical form 
\begin{align}
\label{eq:canonical_uncontrollable_2d}
\frac{d}{dt}y(t,\b)=\left[\begin{array}{cc} 0 & -2\b^2 \\ 1 & 3\b \end{array}\right]y(t,\b)+\left[\begin{array}{c} 1  \\  0 \end{array}\right]u(t).
\end{align}
In this case, it is obvious that the system in \eqref{eq:canonical_uncontrollable_2d} is not uniformly ensemble controllable on $C([1,2],\mathbb{R}^2)$, either.
\end{example}

The proof of Theorem \ref{thm:single_input_ensemble} actually provides a clue to this unwanted phenomenon described in Example \ref{ex:canonical_uncontrollable_2d}: the transformation of a simple-input linear ensemble system to its canonical form, equivalently, the continuity and invertibility of the matrix-valued function $P$ constructed pointwisely in the proof of Theorem \ref{thm:single_input_ensemble}, only requires classical controllability of each individual system in the ensemble. Correspondingly, this also gives the converse to Theorem \ref{thm:single_input_ensemble}, that is, a linear ensemble system in the form of \eqref{eq:single_input_ensemble} can be transformed to its canonical form in \eqref{eq:single_input_ensemble_canonical} providing classical controllability of each individual system in the ensemble. However, in Section \ref{sec:separating_pts}, we presented a couple of examples, including the one revisited in Example \ref{ex:canonical_uncontrollable_2d}, to illustrate the non-sufficiency of classical controllability of individual systems for ensemble controllability of linear ensemble systems. 

In addition, it is worth to comment on the case of non-Sobolev type linear ensemble systems. To this end, without loss of generality, we can restrict our attention to systems with diagonalizable system matrices. Then, the necessary and sufficient condition for the existence of such canonical form transformations is that the system matrices do not have any repeated eigenvalues for all the values of the system parameters. For example, in the above Example \ref{ex:canonical_uncontrollable_2d}, evaluated at any $\b\in[1,2]$, the system matrix of the ensemble in \eqref{eq:uncontrollable_2d} has distinct eigenvalues $\l_1(\b)=\b$ and $\l_2(\b)=2\b$, enabling the transformation of the ensemble to its canonical form in \eqref{eq:canonical_uncontrollable_2d}. The failure of its uniform ensemble controllability actually arises from the overlapping image of the eigenvalues of the system matrix as functions in $\b$, that is, $\l_1([1,2])\cap\l_2([1,2])=[1,2]\cap[2,4]=\{2\}\neq\varnothing$, which then reiterates the major distinction of shared spectra between ensemble and classical linear systems.

\subsection{Functional Rational Canonical Form for Separating Shared Spectra}
\label{sec:canonical_ensemble}

The most effective tool to reduce the disparity between ensemble and classical controllability, along with bridging the gap between ensemble and classical linear systems in terms of separating shared spectra, is the ``spectral reparameterization" procedure used in Proposition \ref{prop:separating_pts}. Indeed, for any linear ensemble system, the feasibility of representing each individual system in the reparamterized ensemble in the controllability canonical form as in \eqref{eq:canonical_single} necessarily guarantees uniform ensemble controllability of the whole ensemble, and vice versa. As a consequence, this gives rise to a canonical form for uniform ensemble controllability in the traditional sense that linear ensemble systems possess this form if and only if they are uniformly ensemble controllable. 

\begin{theorem}[Ensemble Controllability Canonical Form]
\label{thm:canonical_ensemble}
Given a non-Sobolev type time-invariant linear ensemble system parameterized by $\b$ varying on a compact subset $K$ of $\mathbb{R}$ in the form of \eqref{eq:ensemble_linear}, that is,
$$\frac{d}{dt}x(t,\b)=A(\b)x(t,\b)+B(\b)u(t)$$
in which $x(t,\cdot)\in C(K,\mathbb{R}^n)$, $u(t)\in\mathbb{R}^m$, $A\in C(K,\mathbb{R}^{n\times n})$ with the eigenvalue functions $\l_1,\dots,\l_n\in C(K,\mathbb{R})$, and $B\in C(K,\mathbb{R}^{n\times m})$, and its diagonalized counterpart as in \eqref{eq:diagonal}, that is,
$$\frac{d}{dt}y(t,\b)=\Lambda(\b)y(t,\b)+\widetilde{B}(\b)u(t)$$
in which $\Lambda(\b)={\rm diag}(\l_1(\b),\dots,\l_n(\b))$, and $\widetilde{B}(\b)=P^{-1}(\b)B(\b)$ with $P(\b)\in C\big(K,{\rm GL}(n,\mathbb{R})\big)$ such that $P^{-1}(\b)A(\b)P(\b)$ is an upper triangular matrix with the diagonal entries $\l_1(\b),\dots,\l_n(\b)$. Then, the system is uniformly ensemble controllable on $C(K,\mathbb{R}^n)$ if and only if each individual system in the ensemble reparameterized by $\eta=(\eta_1,\dots,\eta_n)\in\l_1(K)\times\cdots\times\l_n(K)$ in \eqref{eq:reparameterization}, i.e., 
$$\frac{d}{dt}\left[\begin{array}{c}  z_1(t,\eta_1) \\ \vdots \\  z_n(t,\eta_n) \end{array}\right]=\left[\begin{array}{c} \eta_1 z_1(t,\eta_1) \\ \vdots \\ \eta_n z_n(t,\eta_n)\end{array}\right]+\left[\begin{array}{c} D_1(\eta_1) \\ \vdots \\ D_n(\eta_n) \end{array}\right]u(t),$$
where $D_i(\l_i)$ is Ensemble Controllability Criterion Matrix associated with the system of the $i^{\rm th}$ state 
$$\frac{d}{dt}z_i(t,\b)=\l_i(\b)z_i(t,\b)+\tilde{b}_i(\b)U(t)$$
of the system in \eqref{eq:diagonal} for every $i=1,\dots,n$, can be transformed to the form
\begin{align}
\label{eq:canonical_ensemble}
\frac{d}{dt}w(t,\eta)=C(\eta)w(t,\eta)+\overline B(\eta)u(t),
\end{align}
where $C(\eta)$ is the rational canonical form of $\Lambda(\eta)$, and
\begin{align*}
C(\eta)=\left[\begin{array}{ccc} C_1(\eta) & & \\ & \ddots & \\ & & C_{k(\eta)}(\eta) \end{array}\right]
\end{align*}
is the rational canonical from of $\Lambda(\eta)$ with the companion blocks $C_i(\eta)\in\mathbb{R}^{n_i(\eta)\times n_i(\eta)}$, and
 \begin{align*}
\overline B(\eta)=\left[\begin{array}{ccccc} \bar b_{11}(\eta) & \cdots & \bar b_{1k(\eta)}(\eta) & \cdots & \bar b_{1l(\eta)}(\eta) \\  & \ddots & \vdots & & \vdots \\  & & \bar b_{k(\eta)k(\eta)}(\eta) & \cdots & \tilde b_{k(\eta)l(\eta)}(\eta) \end{array}\right]
\end{align*}
is in the block upper triangular form with $b_{ii}(\eta)\in\mathbb{R}^{n_i(\eta)\times m_i(\eta)}$, and $n_i(\eta)$, $m_i(\eta)$, $k(\eta)$, and $l(\eta)$ are positive integers depending on $\eta$.
\end{theorem}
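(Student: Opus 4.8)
The plan is to recognize that this theorem is the exact ensemble analogue of the classical result, obtained by composing two tools already in hand: Proposition \ref{prop:separating_pts}, which reduces the global property of uniform ensemble controllability to the classical controllability of every individual system of the reparameterized ensemble \eqref{eq:reparameterization}, and Theorem \ref{thm:canonical_single} together with Corollary \ref{cor:cyclic_structure}, which characterize classical controllability of a finite-dimensional system through its controllability canonical form. The crucial structural observation is that the assertion in \eqref{eq:canonical_ensemble} is a \emph{pointwise} statement in $\eta$: for each fixed $\eta\in\l_1(K)\times\cdots\times\l_n(K)$ one transforms a single finite-dimensional system on $\mathbb{R}^N$, $N=\sum_{i=1}^n\kappa_i(\eta_i)$, so no global continuity of the coordinate change in $\eta$ is required and the reduction to the classical setting is clean. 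I would first reduce the non-Sobolev case to its diagonalizable counterpart via Remark \ref{rmk:Sobolev}, so that Proposition \ref{prop:separating_pts} applies verbatim.

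For the forward implication, I would invoke Proposition \ref{prop:separating_pts} to conclude that uniform ensemble controllability is equivalent to classical controllability, on $\mathbb{R}^N$, of the reparameterized individual system indexed by $\eta$, whose system matrix is the diagonal matrix $\Lambda(\eta)=\mathrm{diag}(\eta_1 I_{\kappa_1(\eta_1)},\dots,\eta_n I_{\kappa_n(\eta_n)})$. Applying Theorem \ref{thm:canonical_single} to this controllable system yields a coordinate change under which the system matrix becomes its rational canonical form $C(\eta)$ and the control matrix becomes block upper triangular, which is precisely the form \eqref{eq:canonical_ensemble}. The one identification to check is that the rational canonical form of $\Lambda(\eta)$ has the companion-block structure claimed: since $\Lambda(\eta)$ is diagonalizable, its invariant factors are products of distinct linear factors, and by the divisibility argument at the end of Section \ref{sec:rational} an eigenvalue of multiplicity $l$ contributes one simple root to each of the first $l$ invariant factors. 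Hence the number of companion blocks $k(\eta)$ equals the maximal multiplicity among $\eta_1,\dots,\eta_n$, i.e. the number of shared spectra at $\eta$, and the block sizes $n_i(\eta)$ are the degrees of those invariant factors; this is exactly the shared-spectrum data that $C(\eta)$ is meant to encode.

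For the converse, I would argue that the form \eqref{eq:canonical_ensemble}, read with its block upper triangular $\overline{B}(\eta)$ whose diagonal blocks $\bar b_{ii}(\eta)$ supply a generator for each of the $k(\eta)$ companion blocks, is itself a controllable system by Corollary \ref{cor:cyclic_structure}: the diagonal blocks furnish the $k(\eta)$ linearly independent generators of the cyclic subspaces in the invariant factor decomposition of $\mathbb{R}^N$. Since classical controllability is invariant under change of coordinates, the reparameterized individual system is controllable on $\mathbb{R}^N$ for every $\eta$, and Proposition \ref{prop:separating_pts} then returns uniform ensemble controllability of the original ensemble.

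I expect the main obstacle to lie in the converse, where one must avoid a vacuous reading of ``can be transformed to the form \eqref{eq:canonical_ensemble}''. Bringing the system matrix alone to rational canonical form is always possible and carries no controllability content, so the entire controllability information must be located in whether the coordinate change can \emph{simultaneously} render $\overline{B}(\eta)$ block upper triangular with diagonal blocks that generate the respective cyclic subspaces. I would therefore state explicitly that the canonical form in question is the one produced by the construction of Theorem \ref{thm:canonical_single} from the control matrix, so that its mere existence forces the generator condition of Corollary \ref{cor:cyclic_structure}, and only then conclude controllability. A secondary bookkeeping point is the $\eta$-dependence of the integers $N$, $k(\eta)$, $n_i(\eta)$, $l(\eta)$, and $m_i(\eta)$, which must be tracked consistently but introduces no genuine analytic difficulty.
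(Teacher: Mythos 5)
Your proposal is correct and follows essentially the same route as the paper, whose proof is simply the observation that the result follows directly from Proposition~\ref{prop:separating_pts} and Theorem~\ref{thm:canonical_single}. Your additional care in the converse direction --- insisting that the canonical form be the one produced by the construction of Theorem~\ref{thm:canonical_single} so that Corollary~\ref{cor:cyclic_structure} applies, rather than a vacuous rational-canonical-form statement --- is a useful clarification the paper leaves implicit.
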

\begin{proof}
The proof directly follows from Proposition \ref{prop:separating_pts} and Theorem \ref{thm:canonical_single}.
\end{proof}

We would like to recapitulate that the ensemble controllability canonical form in \eqref{eq:canonical_ensemble}, compared with the one in \eqref{eq:single_input_ensemble_canonical}, is parameterized by the ``spectral parameter" $\eta=(\eta_1.\dots,\eta_k)\in\l_1(K)\times\cdots\times\l_n(K)$ instead of the original system parameter $\b\in K$, which is the key to representing uniform ensemble controllability in terms of the canonical form for classical controllability. However, treating the system in \eqref{eq:canonical_ensemble} as an ensemble system parameterized by $\eta$, it is by no means to imply its uniform ensemble controllability on $C\big(\l_1(K)\times\cdots\times\l_n(K),\mathbb{R}^n\big)$. Actually, this system may not even be well-defined on $C\big(\l_1(K)\times\cdots\times\l_n(K),\mathbb{R}^n\big)$ because its system matrix $C:\l_1(K)\times\cdots\times\l_n(K)\rightarrow\mathbb{R}^{n\times n}$ is not necessarily a continuous function in $\eta$. To see this, we note that the entries on the subdiagonal, i.e., the entries directly below the main diagonal, of $C$ may switch between 0 and 1 depending on the values of $\eta$, which leads to discontinuity of them, and hence $C$, as functions of $\eta$.

Following the consideration of $C$ as a matrix-valued function in the rational canonical form defined on $\l_1(K)\times\cdots\times\l_n(K)$, we refer to it as the \emph{functional canonical form} of $\Lambda$. Inheriting the separating point character from the classical rational canonical form, which separates repeated eigenvalues, the functional rational canonical form possesses the ability to separate the overlapping images of the eigenvalue functions. Specifically, the points $\eta_1,\dots,\eta_n\in\mathbb{R}$ in the images of the eigenvalue functions $\l_1$, $\dots$, $\l_n$ of $\Lambda$, respectively, are the eigenvalues of the functional rational canonical form $C(\eta)$ of $\Lambda$ evaluated at $\eta=(\eta_1,\dots,\eta_n)$. Together with the diagonal form of $\Lambda$, those in the shared spectra, i.e., with the same values, represented as the repeated eigenvalues of $C(\eta)$, must be distributed to different companion blocks as the simple eigenvalues. Then, denoting the set of eigenvalues of $C_i(\eta)$ by $\Lambda_i\subseteq\{\eta_1,\dots,\eta_n\}$ for each $i=1,\dots,k(\eta)$, we get a tower of sets ordered by inclusion as $\Lambda_1\supseteq\Lambda_2\supseteq\cdots\supseteq\Lambda_{k(\eta)}$ satisfying $|\Lambda_i|=n_i(\eta)$, the dimension of the $i^{\rm th}$ state $z_i(t,\eta)$ of the system in \eqref{eq:reparameterization},  for all $i=1,\dots,k(\eta)$. This observation immediately gives a more concrete representation of the ensemble controllability canonical form in \eqref{eq:canonical_ensemble} by representing the $i^{\rm th}$ companion block $C_i(\eta)$ of $C(\eta)$ and the $(n_1(\eta)+\dots+n_{i-1}(\eta)+1)^{\rm th}$ to $(n_1(\eta)+\dots+n_{i}(\eta)+1)^{\rm th}$ rows $\tilde b_{ii}(\eta)$, $\dots$, $\tilde b_{il(\eta)}(\eta)$ of $\overline B(\eta)$ as functions of the elements in $\Lambda_i$ as
\begin{align*}
C(\eta)=\left[\begin{array}{ccc} C_1(\Lambda_1) & & \\ & \ddots & \\ & & C_{k(\eta)}(\Lambda_{k(\eta)}) \end{array}\right]
\end{align*}
and
 \begin{align*}
& \overline B(\eta)=\nonumber\\
&\left[\begin{array}{ccccc} \tilde b_{11}(\Lambda_1) & \cdots & \tilde b_{1k(\eta)}(\Lambda_1) & \cdots & \tilde b_{1l(\eta)}(\Lambda_1) \\  & \ddots & \vdots & & \vdots \\  & & \tilde b_{k(\eta)k(\eta)}(\Lambda_{k(\eta)}) & \cdots & \tilde b_{k(\eta)l(\eta)}(\Lambda_{k(\eta)}) \end{array}\right].
\end{align*}

Parallel to the necessary classical controllability condition shown in Proposition \ref{prop:number_of_inputs}, it is also possible to derive a necessary uniform ensemble controllability condition in terms of the number of companion blocks in the functional canonical form of the system matrix of a linear ensemble system from its ensemble controllability canonical form.

\begin{corollary}
\label{cor:canonical_ensemble}
Consider a non-Sobolev type time-invariant linear ensemble system defined on $C(K,\mathbb{R}^n)$ as in \eqref{eq:ensemble_linear}. If the system is uniformly ensemble controllable on $C(K,\mathbb{R}^n)$, then the number $m$ of control inputs applied to the system satisfies $m\geq\max_{\eta\in\prod_{i=1}^n\l_i(K)}k(\eta)$.
\end{corollary}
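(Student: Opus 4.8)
The plan is to derive the bound by combining two results already established in the excerpt: the equivalence between uniform ensemble controllability and pointwise classical controllability of the reparameterized family (Proposition \ref{prop:separating_pts}), and the necessary input-count condition for classical controllability (Proposition \ref{prop:number_of_inputs}). In spirit, this corollary is the ensemble analogue of Proposition \ref{prop:number_of_inputs}, and the spectral reparameterization is exactly the device that transports the finite-dimensional inequality $m\geq k$ to the ensemble setting, one value of the spectral parameter at a time.

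First I would invoke Proposition \ref{prop:separating_pts}: since the given system is uniformly ensemble controllable on $C(K,\mathbb{R}^n)$, every individual system in the reparameterized ensemble \eqref{eq:reparameterization} is classically controllable on $\mathbb{R}^N$, where $N=\sum_{i=1}^n\kappa_i(\eta_i)$, for each $n$-tuple $\eta=(\eta_1,\dots,\eta_n)\in\prod_{i=1}^n\l_i(K)$. The key observation here is that the control matrix of this reparameterized system, stacked from the Ensemble Controllability Criterion Matrices $D_i(\eta_i)\in\mathbb{R}^{\kappa_i(\eta_i)\times m}$, has exactly $m$ columns for every $\eta$, so the number of inputs is the same fixed integer $m$ across the entire family.

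Next I would fix an arbitrary $\eta\in\prod_{i=1}^n\l_i(K)$ and regard the corresponding individual system as a classical linear system on $\mathbb{R}^N$. Its system matrix is the block-diagonal, hence diagonalizable, matrix $\mathrm{diag}(\eta_1 I_{\kappa_1(\eta_1)},\dots,\eta_n I_{\kappa_n(\eta_n)})$, and by the discussion in Section \ref{sec:rational} the number of companion blocks in its rational canonical form equals the maximal algebraic multiplicity of its eigenvalues, which is precisely $k(\eta)$. Applying Proposition \ref{prop:number_of_inputs} to this controllable individual system then yields $m\geq k(\eta)$.

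Finally, since the inequality $m\geq k(\eta)$ holds for every $\eta\in\prod_{i=1}^n\l_i(K)$ while $m$ is independent of $\eta$, I would conclude $m\geq\max_{\eta\in\prod_{i=1}^n\l_i(K)}k(\eta)$, the supremum being attained because $k(\eta)$ is integer-valued and bounded above by $n$. There is no deep obstacle here, as the argument is an immediate combination of the two propositions; the only point demanding care is confirming that the $k(\eta)$ named in the statement—the number of companion blocks in the functional canonical form of Theorem \ref{thm:canonical_ensemble}—coincides with the number of companion blocks in the rational canonical form of the reparameterized system matrix to which Proposition \ref{prop:number_of_inputs} is applied, an identification that follows directly from the diagonal structure of $\Lambda(\eta)$ recorded in Theorem \ref{thm:canonical_ensemble}.
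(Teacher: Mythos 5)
Your proof is correct and follows essentially the same route as the paper: the paper's own proof is a one-line citation of Theorem \ref{thm:canonical_ensemble} together with Proposition \ref{prop:number_of_inputs}, and since Theorem \ref{thm:canonical_ensemble} itself rests on Proposition \ref{prop:separating_pts}, your unpacking via pointwise classical controllability of the reparameterized ensemble and the identification of $k(\eta)$ with the number of companion blocks is exactly the intended argument, just spelled out in more detail.
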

\begin{proof}
This corollary is a direct consequence of Theorem \ref{thm:canonical_ensemble} and Proposition \ref{prop:number_of_inputs}.
\end{proof}

This necessary ensemble controllability condition also has a separating point interpretation: the number of control inputs has to be greater than or equal to that of the shared spectra to guarantee ensemble controllability of linear systems, which again consolidates the importance of separating points in ensemble control theory. 

To conclude this paper, we revisit the previous examples to illustrate the use of the ensemble controllability canonical form in the study of linear ensemble systems.

\begin{example}
\label{ex:canonical_ensemble}
Recall the system in \eqref{eq:uncontrollable_2d} defined on $C([1,2],\mathbb{R}^2)$ with the system and control matrices ${\rm diag}(\b,2\b)$ and $[1, 1]'$, respectively. Reparameterizing this system by the eigenvalue functions of its system matrix yields an ensemble indexed by $\eta=(\eta_1,\eta_2)=[1,2]\times[2,4]$ as
\begin{align}
\label{eq:reparam_uncontrollable}
\frac{d}{dt}z(t,\eta)=\left[\begin{array}{cc} \eta_1 & 0 \\ 0 & \eta_2 \end{array}\right]x(t,\b)+\left[\begin{array}{c} 1  \\ 1 \end{array}\right]u(t).
\end{align}
which also reveals the number of the shared spectra for the system matrix as
$$
k(\eta)=
\begin{cases}
1,\quad\text{if }\eta\neq(2,2),\\
2,\quad\text{if }\eta=(2,2).\\
\end{cases}
$$  
Then, uniform ensemble uncontrollability of the system is also indicated by the number of control inputs $m=1<2=\max_{\eta}k(\eta)$, according to Corollary \ref{cor:canonical_ensemble}. Equivalently, we can check that the individual system indexed by $(2,2)$ in the ensemble in \eqref{eq:reparam_uncontrollable} is not controllable on $\mathbb{R}^2$, which then disables the transformation of the original system in \eqref{eq:uncontrollable_2d} to the ensemble controllability canonical form as in \eqref{eq:canonical_ensemble}. Moreover, it also indicates that the form in \eqref{eq:canonical_ensemble} is indeed the correct ensemble controllability canonical form in the sense of characterizing uniform ensemble controllability in a necessary and sufficient fashion, in contrast to the canonical form in \eqref{eq:canonical_uncontrollable_2d} for single-input systems as shown in Example \ref{ex:canonical_uncontrollable_2d}.

On the other hand, if the system in \eqref{eq:uncontrollable_2d} is driven by one more control input, giving the system in \eqref{eq:controllable_2d}, then it becomes uniformly ensemble controllable as shown in Example \ref{ex:controllable_2d}. By Theorem \ref{thm:canonical_ensemble}, uniform ensemble controllability of the system in \eqref{eq:controllable_2d} can also be revealed by transforming its ``spectral parameterization''
\begin{align}
\label{eq:reparam_controllable}
\frac{d}{dt}z(t,\eta)=\left[\begin{array}{cc} \eta_1 & 0 \\ 0 & \eta_2 \end{array}\right]z(t,\b)+\left[\begin{array}{cc} 1 & 0  \\ 1 & 1 \end{array}\right]\left[\begin{array}{c} u(t)  \\ v(t) \end{array}\right]
\end{align}
to the ensemble controllability canonical form. To this end, for $\eta\neq(2,2)$, we pick 
\begin{align}
\label{eq:controllable_2d_reparam_canonical}
P(\eta)=\left[\begin{array}{cc} 1 & \eta_1 \\ 1 & \eta_2 \end{array}\right]
\end{align}
and $\det(P(\eta))=\eta_2-\eta_1\neq0$ for all $\eta\in([1,2]\times[2,4])\backslash\{(2,2)\}$ implies its invertibility. Applying $P(\eta)$ to the system in \eqref{eq:controllable_2d_reparam_canonical} as a change of coordinates $w(t,\eta)=P^{-1}(\eta)z(t,\eta)$ yields
\begin{align}
\label{eq:controllable_2d_canonical}
\frac{d}{dt}w(t,\eta)=\left[\begin{array}{cc} 0 & -\eta_1\eta_2 \\ 1 & \eta_1+\eta_2 \end{array}\right]w(t,\eta)+\left[\begin{array}{cc} 1 &  \frac{-\eta_1}{\eta_2-\eta_1} \\ 0 & \frac{1}{\eta_2-\eta_1} \end{array}\right]\left[\begin{array}{c} u(t) \\ v(t) \end{array}\right],
\end{align}
whose system matrix is the function canonical form $C(\eta)$ of the system matrix ${\rm diag}(\b,2\b)$ of the system in \eqref{eq:controllable_2d} evaluated at $\eta\neq(2,2)$ and control matrix is in the (block) upper triangular form as desired. For the case $\eta=(2,2)$, the system matrix ${\rm diag}(\eta_1,\eta_2)={\rm diag}(2,2)$ is already in the functional rational canonical form, and hence the task is to transform the control matrix to an upper triangular form with the system matrix intact. In particular, we use the control matrix as the change of coordinates
$$P(2,2)=\left[\begin{array}{cc} 1 & 1 \\ 0 & 1 \end{array}\right],$$
which transforms the system in \eqref{eq:reparam_controllable} evaluated at $\eta=(2,2)$ to the desired form as
\begin{align}
\label{eq:controllable_2d_canonical_2}
\frac{d}{dt}w(t,2,2)=\left[\begin{array}{cc} 2 & 0 \\ 0 & 2 \end{array}\right]w(t,2,2)+\left[\begin{array}{cc} 1 &  0 \\ 0 & 1 \end{array}\right]\left[\begin{array}{c} u(t)  \\ v(t) \end{array}\right].
\end{align}
The systems in \eqref{eq:controllable_2d_canonical} and \eqref{eq:controllable_2d_canonical_2} then constitute the ensemble controllability canonical form of the system in \eqref{eq:controllable_2d}, and correspondingly, the system matrix 
\begin{align*}
C(\eta)=\begin{cases}
\ \left[\begin{array}{cc} 0 & -\eta_1\eta_2 \\ 1 & \eta_1+\eta_2 \end{array}\right],\quad\eta\in[1,2]\times[2,4]\backslash\{(2,2)\},\\
\ \left[\begin{array}{cc} 2 & 0 \\ 0 & 2 \end{array}\right],\quad\quad\quad\ \ \eta=(2,2)
\end{cases}
\end{align*}
of the ensemble canonical form gives the functional canonical form of the system matrix system matrix ${\rm diag}(\b,2\b)$ of the system in \eqref{eq:controllable_2d}. At last, we notice that the $(2,1)$-entry of $C(\eta)$ switches between 1 and 0, along with the $(1,1)$-entry switching between 0 and 2, so that $C:[1,2]\times[2,4]\rightarrow\mathbb{R}^{2\times 2}$ is not a continuous function in $\eta$ and hence the system in \eqref{eq:controllable_2d_canonical}, regarded as an ensemble system parameterized by $\eta\in[1,2]\times[2,4]$, cannot evolve on $C([1,2]\times[2,4],\mathbb{R}^2)$. This then provides a solid support for the observation below Theorem \ref{thm:canonical_ensemble} that the concept of uniform ensemble controllability is not well-defined for linear ensemble systems in the ``spectral parameterization". 
\end{example}


\section{Conclusion}
The major contribution of this paper is the development of a holistic approach to algebraic characterizations of controllability properties for time-invariant linear systems, including classical controllability of finite-dimensional linear systems and uniform ensemble controllability of infinite-dimensional linear ensemble systems, by leveraging the technique of separating points. To lay the foundation, we interpret the dynamics of linear systems as the action of polynomials on vector spaces, then integrate the concept of separating points into the structure of finitely generated modules to establish a classical controllability canonical form for multi-input linear systems. In particular, for a system in this form, the system matrix is in the rational canonical form indicating the separation of the shared spectrum, which guarantees controllability. Based on this result, we develop the notion of the functional canonical form for matrix-valued functions by exploiting the method of ``spectral reparametrization" in the separating point technique, and then prove that it is necessary and sufficient for an uniformly ensemble controllable linear ensemble system to admit an ensemble controllability canonical form, in which the system and control matrix are in the functional canonical and block diagonal form, respectively. This, in turn, gives an algebraic characterization of the notion of separating points. It is worth noting that this work unifies the notion of separating points for classical and ensemble linear systems, which technically bridges the gap between classical and ensemble controllability and hence opens up the possibility of utilizing finite-dimensional methods, e.g., those in classical linear systems theory and matrix algebra, to study infinite-dimensional linear systems. Moreover, it also sheds light on the direction towards an inclusive ensemble control theory providing new ways of thinking about and effective tools for those control and learning problems of complex systems that greatly challenge us nowadays.

\bibliographystyle{ieeetr}
\footnotesize
\bibliography{Canonical_form_arXiv}

\end{document}